\newtheorem{theorem}{Theorem}[section]
\newtheorem{lemma}[theorem]{Lemma}
\newtheorem{proposition}[theorem]{Proposition}
\newtheorem{remark}[theorem]{Remark}
\newenvironment{proof}{{\bf
Proof:\,}}{\hspace*{\fill}\rule{1.2ex}{1.2ex}\\ }
\newtheorem{assumption}[theorem]{Assumption}
\newcommand{\di}{\text{div}}
\newcommand{\ve}{\textbf{v}}
\newcommand{\ue}{\textbf{u}}
\newcommand{\we}{\textbf{w}}
\newcommand{\weight}[1]{\langle #1\rangle}
\newcommand{\into}{\int \limits_{\Omega}}
\newcommand{\dx}{\mathit{dx}}
\newcommand{\dt}{\mathit{dt}}
\newcommand{\R}{\mathbb{R}}
\newcommand{\eps}{\varepsilon}
\begin{document}

\title{Local Well-Posedness of a Quasi-Incompressible Two-Phase Flow}



\author{Helmut Abels\thanks{  \textit{Fakult\"at f\"ur Mathematik,   Universit\"at Regensburg,   93040 Regensburg,   Germany}   \textsf {helmut.abels@ur.de} } \  and Josef Weber\thanks{\textit   {Fakult\"at f\"ur Mathematik,   Universit\"at Regensburg,   93040 Regensburg,   Germany}  } }

\date{
\emph{Dedicated to Matthias Hieber on the occasion of his 60th birthday}}

\maketitle

\begin{abstract}
  We show well-posedness of a diffuse interface model for a two-phase flow of two viscous incompressible fluids with different densities locally in time. The model leads to an inhomogeneous Navier-Stokes/Cahn-Hilliard system with a solenoidal velocity field for the mixture, but a variable density of the fluid mixture in the Navier-Stokes type equation. We prove existence of strong solutions locally in time with the aid of a suitable linearization and a contraction mapping argument. To this end we show maximal $L^2$-regularity for the Stokes part of the linearized system and use maximal $L^p$-regularity for the linearized Cahn-Hilliard system.
\end{abstract}

\noindent
{\textbf {Mathematics Subject Classification (2000):}}
Primary: 76T99; Secondary:
35Q30, 
35Q35, 
35R35,
76D05, 
76D45\\ 
{\textbf {Key words:}} Two-phase flow, Navier-Stokes equation, diffuse interface model, mixtures of viscous fluids, Cahn-Hilliard equation

\section{Introduction and Main Result}

In this contribution we study a thermodynamically consistent, diffuse interface model for two-phase flows of two viscous incompressible system with different densities in a bounded domain in two or three space dimensions. The model was derived by A., Garcke and Gr\"un in \cite{MR2890451} and leads to the following inhomogeneous Navier-Stokes/Cahn-Hilliard system: 
\begin{alignat}{1}
\partial_t  (\rho  \ve) + & \di ( \rho \ve \otimes \ve)   + \di \Big ( \ve \otimes \tfrac{\tilde \rho_1 - \tilde \rho_2}{2} m(\varphi) \nabla (\tfrac{1}{\varepsilon} W'(\varphi) - \varepsilon \Delta \varphi ) \Big )     \nonumber \\ 
&= \di (- \varepsilon \nabla \varphi \otimes \nabla \varphi) +  \di (2 \eta (\varphi) D\ve) - \nabla q, \label{equation_strong_solutions_1} \\
\di \ve &= 0,  \\
\partial_t  \varphi +\ve\cdot \nabla \varphi &= \di  ( m( \varphi) \nabla \mu ),   \\
\mu &= - \varepsilon \Delta \varphi  + \frac{1}{\varepsilon} W' (\varphi  ) \label{equation_strong_solutions_2} 
\end{alignat}
in $Q_T:= \Omega\times (0,T)$ together with the initial and boundary values
\begin{align}
\ve_{|\partial \Omega} = \partial_n \varphi_{|\partial \Omega}= \partial_n \mu_{|\partial \Omega}  &= 0  && \text{ on } (0,T) \times \partial \Omega , \\
\varphi (0) = \varphi_0 , \ve(0) &= \ve_0    && \text{ in }  \Omega .  \label{equation_strong_solutions_inital_data_v}
\end{align}
Here  $\Omega \subseteq \mathbb R^d$, $d= 2,3$, is a bounded domain with  $C^4$-boundary. In this model the fluids are assumed to be partly miscible and $\varphi\colon \Omega\times (0,T)\to \R$ denotes the volume fraction difference of the fluids.
$\ve$, $q$, and $\rho$ denote the mean velocity, the pressure and the density of the fluid mixture. It is assumed that the density is a given function of $\varphi$, more precisely
\begin{equation*}
  \rho=\rho(\varphi) = \frac{\tilde{\rho}_1+\tilde{\rho}_2}2 +\frac{\tilde{\rho}_2-\tilde{\rho}_1}2 \varphi \qquad \text{for all }\varphi \in\R. 
\end{equation*}
where $\tilde{\rho}_1, \tilde{\rho}_2$ are the specific densities of the (non-mixed) fluids. Moreover, $\mu$ is a chemical potential and $W(\varphi)$ is a homogeneous free energy density  associated to the fluid mixture, $\eps>0$ is a constant related to ``thickness'' of the diffuse interface, which is described by $\{x\in \Omega: |\varphi(x,t)|<1-\delta\}$ for some (small) $\delta>0$, and $m(\varphi)$ is a mobility coefficient, which controls the strength of the  diffusion in the system. Finally $\eta(\varphi)$ is a viscosity coefficient and $D\ve= \frac12(\nabla \ve + \nabla \ve^T)$. 

Existence of weak solution for this system globally in time was shown by A., Depner, and Garcke in \cite{AbelsDepnerGarcke} and \cite{AbelsDepnerGarckeDegMob} for non-degenerate and degenerate mobility in the case of a singular free energy density $W$. Moreover, Gr\"un showed in   \cite{GruenAGG}  convergence (of suitable subsequences) of a fully discrete finite-element scheme for this system to a weak solution in the case of a smooth $W\colon \R \to \R$ with suitable polynomial growth. In the case of dynamic boundary conditions, which model moving contact lines, existence of weak solutions for this system was shown by Gal, Grasselli, and Wu in \cite{MR3981392}. In the case of non-Newtonian fluids of suitable $p$-growth existence of weak solutions was proved by A.\ and Breit \cite{AbelsBreit}. For the case of a non-local Cahn-Hilliard equation and Newtonian fluids the corresponding results was derived by Frigeri in \cite{MR3540647} and for a model with surfactants by Garcke and the authors in \cite{MR3845562}. Recently, Giorgini~\cite{GiorginiPreprint} proved existence of local strong solutions in a two-dimensional bounded, sufficiently smooth domain and global existence of strong solutions in the case of a two-dimensional torus.  

\begin{remark}
In \cite{AbelsDepnerGarcke} it is shown that the first equation is equivalent to
\begin{align}\nonumber
\rho \partial_t \ve  + \Big(  \rho \ve + \tfrac{\tilde \rho_1   - \tilde \rho_2}{2} m(\varphi) & \nabla   (\tfrac{1}{\varepsilon} W' (\varphi) - \varepsilon \Delta \varphi )  \Big) \cdot \nabla \ve +  \nabla p - \di (2 \eta (\varphi) D\ve) \\\label{eq:equivalent1}
&= - \varepsilon \Delta \varphi \nabla \varphi .
\end{align}
This reformulation will be useful in our analysis.  
\end{remark}


For the following we assume: 
\begin{assumption}\label{strong_solutions_general_assumptions} 
\begin{enumerate}
\item Let $\Omega \subseteq \mathbb R^d$ be a bounded domain with $C^4$-boundary and $d = 2,3$.
\item Let $\eta,m  \in C^5_b (\mathbb R)$ be such that $\eta (s) \geq \eta_0 > 0$ and $m(s)\geq m_0$ for every $s \in \mathbb R$ and some $\eta_0,m_0 > 0$.
\item The density $\rho\colon \R\to \R$ is given by
\begin{align*}
\rho = \rho (\varphi ) = \frac{\tilde \rho_1 + \tilde \rho_2}{2} + \frac{\tilde \rho_2 - \tilde \rho_1}{2} \varphi  \qquad \text{ for all } \varphi \in \mathbb R .
\end{align*}
\item $W\colon \R\to \R$ is five times continuously differentiable.
\end{enumerate}
\end{assumption}

With these assumptions we will show our main existence result on short time existence of strong solutions for \eqref{equation_strong_solutions_1}-\eqref{equation_strong_solutions_inital_data_v}:
\begin{theorem}[Existence of strong solutions]~\label{strong_solution_existence_proof_of_strong_solution}
\\
Let $\Omega$, $\eta$, $m$, $\rho$ and $W$ be as in Assumption \ref{strong_solutions_general_assumptions}. Moreover, let $\bold v_0 \in H^{1}_0 (\Omega)^d \cap L^2_\sigma (\Omega) $ and $\varphi_0 \in (L^p (\Omega) , W^4_{p,N} (\Omega))_{1- \frac{1}{p}, p}$ be given with $|\varphi_0(x)|\leq 1$ for all $x\in\Omega$ and $4 < p < 6$. Then there exists $T > 0$ such that  \eqref{equation_strong_solutions_1}-\eqref{equation_strong_solutions_inital_data_v} has a unique strong solution 
\begin{align*}
\bold v & \in W^1_2 (0,T; L^2_\sigma (\Omega)) \cap L^2 (0,T; H^2 (\Omega)^d \cap H^1_0 (\Omega)^d) , \\
\varphi & \in W^1_p (0,T; L^p (\Omega)) \cap L^p (0,T; W^4_{p,N} (\Omega)),
\end{align*}
where $W^4_{p,N}(\Omega)=\{u\in W^4_p(\Omega): \partial_n u|_{\partial\Omega}= \partial_n \Delta u|_{\partial\Omega}=0\}$.
\end{theorem}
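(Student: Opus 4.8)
The plan is to rewrite the momentum equation in the non-conservative form \eqref{eq:equivalent1}, freeze the $\varphi$-dependent coefficients at the initial value $\varphi_0$, and solve the resulting problem by a Banach fixed-point argument in the maximal regularity class stated in the theorem. Concretely, I would split off the two linear principal parts: the variable-density Stokes operator $\ve \mapsto \rho(\varphi_0)\partial_t \ve - \di(2\eta(\varphi_0) D\ve) + \nabla q$ together with $\di\,\ve = 0$ and $\ve|_{\partial\Omega}=0$, and the linearized Cahn--Hilliard operator whose principal part is $\varphi \mapsto \partial_t\varphi + \eps\, m(\varphi_0)\Delta^2\varphi$ under the boundary conditions $\partial_n\varphi = \partial_n\Delta\varphi = 0$. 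Everything else is moved to the right-hand side as a nonlinear perturbation $N(\ve,\varphi)$: the convective terms, the capillary force $-\eps\Delta\varphi\nabla\varphi$, the relative-flux transport term $\mathbf{J}\cdot\nabla\ve$ with $\mathbf{J}:=\tfrac{\tilde\rho_1-\tilde\rho_2}{2}m(\varphi)\nabla\mu$, the transport term $\ve\cdot\nabla\varphi$, the lower-order contributions of $W'(\varphi)$, and the coefficient differences $(\rho(\varphi)-\rho(\varphi_0))\partial_t\ve$, $(\eta(\varphi)-\eta(\varphi_0))$ and $(m(\varphi)-m(\varphi_0))$.

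The two linear building blocks require maximal regularity. For the Stokes part I would work in the $\rho(\varphi_0)$-weighted solenoidal space $L^2_\sigma(\Omega)$ and show that the associated operator is self-adjoint and positive by a Lax--Milgram/form argument, using $\eta(\varphi_0)\ge\eta_0>0$, $\rho(\varphi_0)$ bounded above and below, and the fact that $\varphi_0$ is continuous because its trace space embeds into $C(\overline\Omega)$ for $p>4$. Maximal $L^2$-regularity then follows on the Hilbert space from de Simon's theorem, and the pressure is recovered via the weighted Helmholtz--Leray projection. For the Cahn--Hilliard part I would invoke the known maximal $L^p$-regularity of the fourth-order operator $\partial_t + \eps\, m(\varphi_0)\Delta^2$ under these boundary conditions, which produces the class $W^1_p(0,T;L^p)\cap L^p(0,T;W^4_{p,N})$ and, by trace theory, requires exactly $\varphi_0\in(L^p,W^4_{p,N})_{1-1/p,p}$.

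With these in hand I would define the fixed-point map $\mathcal{L}$ on a ball in the product space of the stated regularity: given $(\tilde\ve,\tilde\varphi)$, solve the two linear problems with right-hand side $N(\tilde\ve,\tilde\varphi)$ and the prescribed initial data, and show that $\mathcal{L}$ is a self-map and a contraction for $T$ small. The nonlinear estimates rely on the embeddings of the maximal regularity spaces, in particular $\ve\in C([0,T];H^1_0)\hookrightarrow C([0,T];L^6)$ and $\varphi\in C([0,T];(L^p,W^4_{p,N})_{1-1/p,p})$ with $W^1_p\hookrightarrow L^\infty$; smallness of the relevant factors in $T$ comes from the fact that all candidates share the same initial trace, so differences such as $\varphi-\varphi_0$ vanish at $t=0$ and are $O(T^\theta)$ in the working norms by Hölder in time. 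The constraints on $p$ are dictated here: $p>4$ makes $\nabla\varphi$, $\nabla\mu$ and hence $\mathbf{J}$ bounded, so that $\mathbf{J}\cdot\nabla\ve$ and $-\eps\Delta\varphi\nabla\varphi$ lie in $L^2(0,T;L^2)$, while $p<6$ lets the transport term $\ve\cdot\nabla\varphi$ be controlled in $L^p(0,T;L^p)$ through $\ve\in C([0,T];L^6)$. Banach's fixed-point theorem then yields the unique strong solution.

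The main obstacle I anticipate is twofold. First, the variable density forces maximal $L^2$-regularity for the non-standard operator $\rho(\varphi_0)\partial_t - \di(2\eta(\varphi_0) D\,\cdot)$ on a weighted solenoidal space rather than for the usual constant-coefficient Stokes operator, and one must absorb the coefficient-difference term $(\rho(\varphi)-\rho(\varphi_0))\partial_t\ve$ --- which carries the full time derivative --- using only the smallness of $\varphi-\varphi_0$. Second, and more delicate, is the mismatch of scales: the velocity is estimated in $L^2$-based and the phase field in $L^p$-based spaces, so every coupling term must be threaded through the Sobolev embeddings in the narrow window $4<p<6$, and in particular the capillary term $-\eps\Delta\varphi\nabla\varphi$, a product of high-order derivatives of $\varphi$, must be shown to be small in $L^2(0,T;L^2)$. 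This cross-scale bookkeeping is where the bulk of the technical work will concentrate.
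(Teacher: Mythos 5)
Your proposal follows essentially the same route as the paper: the non-conservative reformulation \eqref{eq:equivalent1}, linearization of the principal parts at $\varphi_0$ (weighted Stokes operator $\rho(\varphi_0)\partial_t - \mathbb{P}_\sigma\di(2\eta(\varphi_0)D\,\cdot)$ and $\partial_t+\eps m(\varphi_0)\Delta^2$), maximal $L^2$/$L^p$-regularity for the two linear blocks, and a contraction argument whose smallness in $T$ comes from all candidates sharing the initial trace, with the same identification of why $4<p<6$ is needed. The only cosmetic difference is that you obtain the weighted Stokes maximal regularity via a form/de Simon argument where the paper invokes Showalter's subdifferential evolution theorem; both reduce to Hilbert-space self-adjointness plus the same elliptic regularity lemma identifying the domain with $H^2\cap H^1_0\cap L^2_\sigma$, so the substance is unchanged.
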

We will prove this result with the aid of a contraction mapping argument after a suitable reformulation, similar to \cite{MR2504845}. But for the present system the linearized system is rather different. 

The structure of this contribution is as follows: In Section~\ref{sec:prelim} we introduce some basic notation and recall some results used in the following. The main result is proved in Section~\ref{sec:Main}. For its proof we use suitable estimates of the non-linear terms, which are shown in Section~\ref{sec:Lipschitz}, and a result on maximal $L^2$-regularity of a Stokes-type system, which is shown in Section~\ref{sec:Linear}.

\section{Preliminaries}\label{sec:prelim}

For an open set $U\subseteq \R^d$, $m\in\mathbb{N}_0$ and $1\leq p \leq\infty$ we denote by $W^m_p(U)$ the $L^p$-Sobolev space of order $m$ and $W^m_p(U;X)$ its $X$-valued variant, where $X$ is a Banach space. In particular, $L^p(U)=W^0_p(U)$ and $L^p(U;X)= W^0_p(U;X)$. Moreover, $B^s_{pq}(\Omega)$ denotes the standard Besov space, where $s\in\R$, $1\leq p,q\leq \infty$, and $L^2_\sigma(\Omega)$ is  the closure of $C^\infty_{0,\sigma}(\Omega)= \{\ue\in C^\infty_0(\Omega)^d: \operatorname{div} \ue=0\}$ in $L^2(\Omega)^d$ and $\mathbb{P}_\sigma\colon L^2(\Omega)^d\to L^2_\sigma(\Omega)$ the orthogonal projection onto it, i.e., the Helmholtz projection.

We will frequently use:
\begin{theorem}[Composition with Sobolev functions]~\label{theorem_composition_sobolev_functions}\\
Let $\Omega\subseteq \R^d$ be a bounded domain with $C^1$-boundary, $m,n\in \mathbb{N}$ and let $1\leq p <\infty$ such that $m - dp > 0$. Then for every $f\in C^m(\R^N)$ and every $R>0$ there exists a constant $C>0$ such that for all $u\in W^m_p(\Omega)^N$ with $\|u\|_{W^m_p(\Omega)^N}\leq R$, we have $f(u)\in W^m_p(\Omega)$ and $\|f(u)\|_{W^m_p(\Omega)}\leq C$.
Moreover, if $f\in C^{m+1}(\R^N)$, then for all $R>0$ there exists a constant $L>0$ such
that
\begin{equation*}
  \|f(u)-f(v)\|_{W^m_p(\Omega)}\leq L \|u-v\|_{W^m_p(\Omega)^N}
\end{equation*}
for all $u, v\in W^m_p(\Omega)^N$ with  $\|u\|_{W^m_p(\Omega)^N}, \|v\|_{W^m_p(\Omega)^N}\leq R$.
\end{theorem}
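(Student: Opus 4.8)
\emph{Interpreting the hypothesis and recording the two structural facts.}
The hypothesis $m-d/p>0$ (i.e.\ $mp>d$) is exactly the condition under which $W^m_p(\Omega)$ embeds continuously into $C(\overline\Omega)\cap L^\infty(\Omega)$ and, at the same time, forms a Banach algebra; these are the two pillars of the proof, so the plan is to record them first and then reduce everything to the chain rule together with Gagliardo--Nirenberg interpolation. From the embedding there is $C_S>0$ with $\norm{u}_{L^\infty(\Omega)^N}\le C_S\norm{u}_{W^m_p(\Omega)^N}\le C_SR=:M$ for every admissible $u$, so that $u$ takes values in the fixed compact ball $\overline{B_M(0)}\subseteq\R^N$. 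Since $f\in C^m$, all derivatives $D^kf$ with $k\le m$ are continuous, hence bounded by a constant $K=K(f,R)$ on that ball. I would also fix the algebra constant $C_A$ with $\norm{gh}_{W^m_p(\Omega)}\le C_A\norm{g}_{W^m_p(\Omega)}\norm{h}_{W^m_p(\Omega)}$; the $C^1$-regularity of $\partial\Omega$ enters here and in the interpolation step below through an extension operator.

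\emph{The boundedness assertion.}
For each multi-index $\alpha$ with $|\alpha|\le m$ I would expand $\partial^\alpha(f(u))$ by the Fa\`a di Bruno formula into a finite sum of terms $c_\alpha\,(D^kf)(u)\,\prod_{j=1}^{k}\partial^{\beta_j}u_{i_j}$, where $|\beta_j|\ge 1$ and $\sum_{j=1}^k|\beta_j|=|\alpha|$. The factor $(D^kf)(u)$ is bounded in $L^\infty$ by $K$. For the product of derivatives I would apply the Gagliardo--Nirenberg inequality on the bounded $C^1$-domain $\Omega$ factorwise, using $L^\infty$ as the low-order endpoint: with the interpolation parameter $\theta_j=|\beta_j|/m$ one gets $\norm{\partial^{\beta_j}u}_{L^{q_j}}\le C\norm{u}_{W^m_p}^{\theta_j}\norm{u}_{L^\infty}^{1-\theta_j}$ where $1/q_j=|\beta_j|/(mp)$. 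Because $\sum_j 1/q_j=|\alpha|/(mp)\le 1/p$, a generalized H\"older inequality together with the embedding $L^{\bar q}(\Omega)\hookrightarrow L^p(\Omega)$, $\bar q\ge p$, on the bounded domain bounds the product in $L^p$ by $C\norm{u}_{W^m_p}^{|\alpha|/m}\norm{u}_{L^\infty}^{k-|\alpha|/m}$. All exponents are nonnegative and the bases are controlled by $R$, so summing over $\alpha$ and over the finitely many Fa\`a di Bruno terms yields $\norm{f(u)}_{W^m_p(\Omega)}\le C(f,R)$.

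\emph{The Lipschitz estimate.}
With $f\in C^{m+1}$, so that $Df\in C^m(\R^N;\R^N)$, I would write, via the fundamental theorem of calculus,
\begin{equation*}
f(u)-f(v)=\int_0^1 Df\big(v+t(u-v)\big)\cdot(u-v)\,dt .
\end{equation*}
For $t\in[0,1]$ the path point $w_t:=v+t(u-v)$ satisfies $\norm{w_t}_{W^m_p}\le R$ by convexity, so the first part, applied to the $C^m$-map $Df$, gives $\norm{Df(w_t)}_{W^m_p(\Omega)^N}\le C(Df,R)$ uniformly in $t$. The algebra property then yields $\norm{Df(w_t)\cdot(u-v)}_{W^m_p}\le C_A\,C(Df,R)\,\norm{u-v}_{W^m_p}$, and since $t\mapsto Df(w_t)\cdot(u-v)$ is continuous, hence Bochner integrable, into $W^m_p(\Omega)$, moving the norm inside the integral produces the claim with $L=C_A\,C(Df,R)$.

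\emph{Where the difficulty lies.}
The Fa\`a di Bruno bookkeeping and the integral estimate are routine; the one delicate point is the Gagliardo--Nirenberg step, where the endpoint choice $\theta_j=|\beta_j|/m$ can fall into an excluded borderline case (when $m-|\beta_j|-d/p$ is a nonnegative integer and the exponent is forced). I would avoid this by exploiting the slack $\sum_j 1/q_j\le 1/p$: perturb the $q_j$ slightly upward so that each $\theta_j<1$ strictly while $\sum_j 1/q_j$ remains below $1/p$, which is harmless on the bounded domain. This flexibility, together with the embedding and algebra structure granted by $mp>d$, is what makes all the estimates close.
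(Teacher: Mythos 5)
Your proof is correct, but it is genuinely different from the paper's, which contains no self-contained argument at all: for the boundedness part the authors simply cite Runst--Sickel (Chapter 5, Theorem 1 and Lemma), and for the Lipschitz part they remark that it ``can be easily reduced to the first part.'' You instead prove everything from scratch: Fa\`a di Bruno plus Gagliardo--Nirenberg interpolation with the $L^\infty$ endpoint (the exponent bookkeeping $1/q_j=|\beta_j|/(mp)$, $\sum_j 1/q_j=|\alpha|/(mp)\le 1/p$ is exactly right, and your worry about the exceptional Nirenberg case is in fact vacuous here, since whenever $k\ge 2$ every $|\beta_j|<m$ forces $\theta_j<1$ strictly, and the $k=1$, $|\beta_j|=m$ term is trivial), and then the fundamental theorem of calculus plus the Banach algebra property for the Lipschitz estimate. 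What your route buys is transparency and explicit constants depending only on $f$, $R$, $\Omega$; what the citation route buys is brevity and, in Runst--Sickel, validity in greater generality (fractional-order spaces). One small remark on your Lipschitz step: the intended ``easy reduction'' in the paper is presumably the cleaner trick of applying the first part to the $C^m$ function $g(x,y):=\int_0^1 Df\bigl(y+t(x-y)\bigr)\,dt$ on $\R^{2N}$ (legitimate since $f\in C^{m+1}$, differentiating under the integral), after which $f(u)-f(v)=g(u,v)\cdot(u-v)$ pointwise and the algebra property finishes; this avoids the Bochner-integrability of $t\mapsto Df(w_t)\cdot(u-v)$ in $W^m_p$ that you assert via continuity, which is true but needs its own small argument (and for $p=1$, where reflexivity fails, is most safely handled via Pettis measurability or by working with the pointwise identity). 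That technicality is the only soft spot in an otherwise complete argument.
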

\begin{proof}
  The first part follows from \cite[Chapter 5, Theorem 1 and Lemma]{RunstSickel}. The second part can be easily reduced to the first part.
\end{proof}
In particular we have $uv\in W^m_p(\Omega)$ for all $u,v\in W^m_p(\Omega)$ under the assumptions of the theorem.

Let $X_0,X_1$ be Banach spaces such that $X_1\hookrightarrow X_0$ densely. It is well known that
\begin{equation} 
  \label{eq:BUCEmbedding}
   W^1_p(I;X_0) \cap L^p(I;X_1) \hookrightarrow BUC(I;(X_0,X_1)_{1-\frac1p,p}), \qquad 1\leq p <\infty,
\end{equation}
continuously for $I=[0,T]$, $0<T<\infty$,  and $I=[0,\infty)$, cf. Amann~\cite[Chapter III, Theorem 4.10.2]{Amann}. Here $(X_0,X_1)_{\theta,p}$ denotes the real interpolation space of $(X_0,X_1)$ with exponent $\theta$ and summation index $p$. Moreover, $BUC(I;X)$ is the space of all bounded and uniformly continuous $f\colon I\to X$ equipped with the supremum norm, where $X$ is a Banach space. 

Moreover, we will use:
\begin{lemma}\label{lemma_hoelder_sobolev_interpolation_to_hoelder}
Let $X_0 \subseteq Y \subseteq X_1$ be Banach spaces such that 
\begin{align*}
\|x\|_Y \leq C \|x\|^{1 - \theta}_{X_0} \|x\|^\theta_{X_1}
\end{align*}
for every $x \in X_0$ and a constant $C > 0$, where $\theta \in (0,1)$. Then 
\begin{align*}
C^{0, \alpha} ([0,T]; X_1) \cap L^\infty (0,T; X_0) \hookrightarrow C^{0, \alpha \theta} ([0,T] ; Y) . 
\end{align*}
continuously.
\end{lemma}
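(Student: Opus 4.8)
The plan is to obtain the H\"older estimate directly by applying the interpolation inequality to the differences $f(t)-f(s)$, and then to promote the resulting almost-everywhere bound to a genuine pointwise statement by a density argument. Let $f\in C^{0,\alpha}([0,T];X_1)\cap L^\infty(0,T;X_0)$ and abbreviate $M:=\norm{f}_{L^\infty(0,T;X_0)}$ and by $[f]_\alpha$ the H\"older seminorm of $f$ in $C^{0,\alpha}([0,T];X_1)$. Since $\norm{f(t)}_{X_0}\le M$ holds for $t$ outside some null set $N\subseteq [0,T]$, for any $t,s\in[0,T]\setminus N$ the difference $f(t)-f(s)$ lies in $X_0$, so the hypothesis applies to $x=f(t)-f(s)$ and yields
\[
\norm{f(t)-f(s)}_Y \le C\,\norm{f(t)-f(s)}_{X_0}^{1-\theta}\,\norm{f(t)-f(s)}_{X_1}^{\theta}.
\]
First I would bound the $X_0$-factor by $\norm{f(t)}_{X_0}+\norm{f(s)}_{X_0}\le 2M$ and the $X_1$-factor by $[f]_\alpha\,|t-s|^\alpha$, which gives
\[
\norm{f(t)-f(s)}_Y \le C\,(2M)^{1-\theta}[f]_\alpha^{\theta}\,|t-s|^{\alpha\theta}\qquad (t,s\in[0,T]\setminus N).
\]

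The main point to get right is that this estimate is a priori valid only off the null set $N$, because the $L^\infty$-bound in $X_0$ holds only almost everywhere; upgrading it to an estimate for \emph{all} $t,s$ is the step I expect to be the real (if mild) obstacle. The argument I would use is the following. The displayed bound shows that the restriction of $f$ to the dense set $[0,T]\setminus N$ is uniformly $\alpha\theta$-H\"older continuous into $Y$, hence extends uniquely to a function $g\in C^{0,\alpha\theta}([0,T];Y)$ with the same H\"older seminorm. Since $Y\hookrightarrow X_1$, the extension $g$ is continuous as an $X_1$-valued map and agrees with $f$ on $[0,T]\setminus N$; as $f$ is itself continuous into $X_1$ by assumption, the two continuous $X_1$-valued maps coincide on all of $[0,T]$. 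Therefore $f(t)=g(t)\in Y$ for every $t$, and $f\in C^{0,\alpha\theta}([0,T];Y)$.

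Finally I would verify continuity of the embedding. The H\"older seminorm of $f$ in $C^{0,\alpha\theta}([0,T];Y)$ is controlled by $C(2M)^{1-\theta}[f]_\alpha^{\theta}$ from the estimate above, while applying the interpolation inequality to $x=f(t)$ for $t\notin N$ gives $\norm{f(t)}_Y\le C\,M^{1-\theta}\norm{f}_{C^0([0,T];X_1)}^{\theta}$. Both quantities are dominated by a constant multiple of $\norm{f}_{L^\infty(0,T;X_0)}^{1-\theta}\,\norm{f}_{C^{0,\alpha}([0,T];X_1)}^{\theta}$, and since the full $C^{0,\alpha\theta}([0,T];Y)$-norm is the sum of the sup-norm and the H\"older seminorm, this establishes the asserted continuous embedding.
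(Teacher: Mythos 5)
Your proof is correct and is exactly the standard ``straightforward'' argument that the paper alludes to when it omits the proof of this lemma: apply the interpolation inequality to the differences $f(t)-f(s)$ off the null set where the $L^\infty(0,T;X_0)$-bound holds, and then upgrade to all of $[0,T]$ via density and the continuity of $f$ into $X_1$. The final norm estimate via $\|f\|_{L^\infty(0,T;X_0)}^{1-\theta}\|f\|_{C^{0,\alpha}([0,T];X_1)}^{\theta}$ correctly yields continuity of the embedding, so there is nothing to object to.
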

The result is well-known and can be proved in a straight forward manner.

\section{Proof of the Main Result}~\label{sec:Main}
We prove the existence of a unique strong solution $(\ve, \varphi) \in X_T$ for small $T > 0$, where the space $X_T$ will be specified later.
The idea for the proof is to linearize the highest order terms in the equations above at the initial data and then to split the equations in a linear and a nonlinear part such that
\begin{align*}
\mathcal L (\ve, \varphi ) = \mathcal F (\ve , \varphi ) ,
\end{align*}
where we still have to specify in which sense this equation has to hold.
To linearize it formally at the initial data we replace $\ve$, $p$ and $\varphi$ by $\ve_0 + \varepsilon \ve$, $p_0 + \varepsilon p$ and $\varphi_0 + \varepsilon \varphi$ and then differentiate with respect to $\varepsilon$ at $\varepsilon = 0$. 
In (\ref{equation_strong_solutions_1}) and the equivalent equation \eqref{eq:equivalent1}, the highest order terms with respect to $t$ and $x$ are
$\rho \partial_t \ve $, $ \di (2 \eta (\varphi) D\ve)$ and $\nabla p$. Hence the linearizations are given by
\begin{align*}
\frac{ d}{ \mathit{d \varepsilon}} \left ( \rho (\varphi_0 + \varepsilon \varphi ) \partial_t  (\ve_0 + \varepsilon \ve) \right )_{| \varepsilon = 0} &= \rho ' (\varphi_0) \varphi \partial_t \ve_0 + \rho (\varphi_0) \partial_t \ve =  \rho_0 \partial_t \ve , \\
\frac{d}{\mathit{d \varepsilon}} \left ( \di (2 \eta (\varphi_0 + \varepsilon \varphi) D(\ve_0 + \varepsilon \ve ) ) \right )_{| \varepsilon = 0} &= \di (2 \eta ' (\varphi_0) \varphi D\ve_0) +  \di (2 \eta (\varphi_0) D\ve ) , \\
\frac{d}{\mathit{d \varepsilon}} \nabla (p_0 + \varepsilon p)_{| \varepsilon = 0} &= \nabla p ,
\end{align*}
where $\rho_0 := \rho (\varphi_0)$ and $\rho_0 ' := \rho ' (\varphi_0)$. Moreover, we omit the term $\di (2 \eta ' (\varphi_0) \varphi D\ve_0) $ in the second linearization since it is of lower order. For the last equation we get the linearization
\begin{align*}
  &\frac{d}{d \tilde \varepsilon} \di (m (\varphi_0 + \tilde \varepsilon \varphi) \nabla ( - \varepsilon \Delta (\varphi_0 + \tilde \varepsilon \varphi )))_{|\tilde \varepsilon = 0}  &\\
  &\quad = -  \varepsilon  \di ( m ' (\varphi_0) \varphi \nabla \Delta \varphi_0 ) - \varepsilon \di ( m (\varphi_0) \nabla  \Delta \varphi) .
\end{align*}
We can omit the first term since it is of lower order. The second term can formally be reformulated as
\begin{align*}
- \varepsilon \di (m (\varphi_0) \nabla \Delta \varphi) = - \varepsilon m' (\varphi_0) \nabla \varphi_0 \cdot \nabla \Delta \varphi - \varepsilon m (\varphi_0) \Delta ( \Delta \varphi ).
\end{align*}
Here the first summand is of lower order again. Hence, the linearization is given by $- \varepsilon m (\varphi_0) \Delta ^2 \varphi$ upto terms of lower order.
Due to these linearizations we define the linear operator $\mathcal L \colon X_T \rightarrow Y_T$ by
\begin{align*}
\mathcal L (\ve, \varphi) = 
\begin{pmatrix}
\mathbb P_\sigma ( \rho_0 \partial_t \ve ) - \mathbb P_\sigma ( \di (2 \eta (\varphi_0) D\ve ))  \\
\partial_t \varphi + \varepsilon m(\varphi_0) \Delta^2  \varphi
\end{pmatrix} ,
\end{align*}
where $\mathcal L$ consists of the principal part of the lionization's, i.e., of the terms of the highest order. Furthermore, we define the nonlinear operator $\mathcal F \colon X_T \rightarrow Y_T$ by
\begin{align*}
\mathcal F ( \ve, \varphi) = 
\begin{pmatrix}
\mathbb P_\sigma F_1 (\ve, \varphi)  \\
- \nabla \varphi \cdot \ve + \di ( \tfrac{1}{\varepsilon} m(\varphi) \nabla W' (\varphi)) + \varepsilon  m (\varphi_0) \Delta^2 \varphi - \varepsilon \di ( m (\varphi) \nabla \Delta \varphi)
\end{pmatrix} ,
\end{align*}
where
\begin{align*}
F_1 (\ve, \varphi) = ( & \rho_0 - \rho ) \partial_t \ve  - \di (2 \eta (\varphi_0) D\ve ) + \di ( 2 \eta (\varphi) D\ve )  - \varepsilon \Delta \varphi \nabla \varphi \\
& - \left   (  \left ( \rho \ve + \tfrac{\tilde \rho_1   - \tilde \rho_2}{2} m(\varphi)  \nabla (\tfrac{1}{\varepsilon} W' (\varphi) - \varepsilon \Delta \varphi )  \right ) \cdot \nabla \  \right ) \ve .
\end{align*}
It still remains to define the spaces $X_T$ and $Y_T$. To this end, we set
\begin{align*}
Z^1_T &:= L^2 (0,T; H^2 (\Omega)^d \cap H^1_0 (\Omega)^d) \cap  W^1_2 (0,T; L^2_\sigma (\Omega)) , \\
Z^2_T &:= L^p (0,T; W^4_{p,N} (\Omega)) \cap W^1_p (0,T; L^p (\Omega))
\end{align*}
with $4 < p < 6$, where
\begin{align*}
W^4 _{p,N} (\Omega) := \{ \varphi \in W^4_p (\Omega) | \ \partial_n \varphi = \partial_n (\Delta \varphi ) = 0 \} .
\end{align*} 
We equip $Z^1_T$ and $Z^2_T$ with the norms $\|\cdot\|_{Z^1_T} '$ and $\|\cdot\|_{Z^2_T} '$ defined by
\begin{align}
\|\ve\|_{Z^1_T} ' & := \|\ve'\|_{L^2 (0,T; L^2 (\Omega))} + \|\ve\|_{L^2 (0,T; H^2 (\Omega))} + \|\ve(0)\|_{(L^2 (\Omega), H^2 (\Omega))_{\frac{1}{2}, 2}} , \nonumber \\
\|\varphi\|_{Z^2_T} ' & := \|\varphi '\|_{L^p (0,T; L^p (\Omega))} + \|\varphi\|_{L^p (0,T; W^4_{p,N} (\Omega))} + \|\varphi (0)\|_{(L^p (\Omega), W^4_p (\Omega))_{1 - \frac{1}{p}, p}} .\nonumber 
\end{align}
We use these norms since they guarantee that for all embeddings we will study later the embedding constant $C$ does not depend on $T$, cf. Lemma \ref{lemma_embedding_constant_does_not_depend_on_T}.
To this end we use:
\begin{lemma}\label{lemma_embedding_constant_does_not_depend_on_T}
Let $0 < T_0 < \infty$ be given and $X_0$, $X_1$ be some Banach spaces such that $X_1 \hookrightarrow X_0$ densely. For every $0 < T < \frac{ T_0}{2}$ we define 
\begin{align*} 
X_T := L^p (0,T; X_1) \cap W^1_p (0,T; X_0),
\end{align*}
where $1 \leq p < \infty$, equipped with the norm
\begin{equation*}
  \|u\|_{X_T}:= \|u\|_{L^p(0,T;X_1)}+\|u\|_{W^1_p(0,T;X_0)}+\|u(0)\|_{(X_0,X_1)_{1-\frac1p,p}}.
\end{equation*}
Then there exists an extension operator $E : X_T \rightarrow X_{T_0}$ and some constant $C > 0$ independent of $T$ such that $Eu_{|(0,T)} = u$ in $X_T$ and
\begin{align*}
\|Eu\|_{X_{T_0}} \leq C \|u\|_{X_T}
\end{align*}
for every $u \in X_T$ and every $0 < T < \frac{T_0}{2}$. Moreover, there exists a constant $\tilde C (T_0) > 0$ independent of $T$ such that
\begin{align*}
\|u\|_{BUC ([0,T]; (X_0, X_1)_{1 - \frac{1}{p},p})} \leq \tilde C (T_0) \|u\|_{X_T}
\end{align*}
for every $u \in X_T$ and every $0 <T < \frac{ T_0}{2}$. 
\end{lemma}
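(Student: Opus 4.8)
The plan is to exploit the extra term $\|u(0)\|_{(X_0,X_1)_{1-\frac1p,p}}$ in the norm of $X_T$: it is precisely this summand that lets us split off the initial data and reduce the extension problem to functions vanishing at $t=0$, for which a simple reflection gives a bound that is manifestly independent of $T$. Concretely, I would construct the extension operator $E$ first (the $BUC$-estimate then follows almost for free), writing $u = z + w$, where $z$ carries the initial value $a := u(0)$ and where $w$ satisfies $w(0)=0$.

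For the initial-value part, recall that $(X_0,X_1)_{1-\frac1p,p}$ is the trace space at $t=0$ of $W^1_p(0,\infty;X_0)\cap L^p(0,\infty;X_1)$ and that the trace map $u\mapsto u(0)$ admits a bounded linear right inverse $S$ (the converse of the trace embedding \eqref{eq:BUCEmbedding}, see e.g. Amann~\cite[Chapter III]{Amann}). I would set $z := (Sa)|_{[0,T_0]}$, so that $z(0)=a$ and, since restriction only decreases the $L^p(0,\cdot)$-norms, $\|z\|_{X_{T_0}} \le C\|a\|_{(X_0,X_1)_{1-\frac1p,p}}$ with $C$ independent of $T$. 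The remainder $w := u - z \in X_T$ then satisfies $w(0)=0$.

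Next I would extend $w$ by even reflection across $t=T$, i.e. $(Ew)(t):= w(2T-t)$ for $t\in(T,2T)$, and by $0$ for $t\in(2T,T_0)$; this is well defined because $T<\frac{T_0}{2}$ gives $2T<T_0$. Since $w(0)=0$, the reflected function takes the value $0$ at $t=2T$, so $Ew$ is continuous on $[0,T_0]$ and hence lies in $W^1_p(0,T_0;X_0)\cap L^p(0,T_0;X_1)$, with $\|Ew\|_{L^p(0,T_0;X_1)}=2^{1/p}\|w\|_{L^p(0,T;X_1)}$ and the analogous identity for the time derivative in $X_0$; in particular $(Ew)(0)=0$. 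Defining $Eu := z + Ew$ gives $Eu|_{(0,T)}=u$, and collecting the estimates together with $\|a\|_{(X_0,X_1)_{1-\frac1p,p}}\le \|u\|_{X_T}$ and $\|w\|_{X_T}\le (1+C)\|u\|_{X_T}$ yields $\|Eu\|_{X_{T_0}}\le C'\|u\|_{X_T}$ with $C'$ independent of $T$, which is the first claim.

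Finally, for the $BUC$-estimate I would apply the fixed-interval embedding \eqref{eq:BUCEmbedding} on $I=[0,T_0]$ to $Eu$, obtaining a constant depending only on $T_0$; restricting back to $[0,T]$, where $Eu=u$, then gives $\|u\|_{BUC([0,T];(X_0,X_1)_{1-\frac1p,p})}\le \tilde C(T_0)\|u\|_{X_T}$. The only genuinely non-elementary ingredient is the existence of the bounded right inverse $S$ of the trace map; everything else (the reflection lying in $W^1_p$ across the gluing points $t=T$ and $t=2T$, and the $T$-independence of the constants) is a routine verification. The point one must be careful about is exactly that $w(0)=0$, without which the extension-by-zero at $t=2T$ would introduce a jump and destroy the $W^1_p$-regularity.
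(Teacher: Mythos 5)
Your proof is correct and follows essentially the same route as the paper's: the authors likewise reduce to the case $u(0)=0$ by subtracting a suitable extension of the initial value and then use the reflection-plus-zero extension operator (deferring the details to \cite[Lemma~5.2]{Dissertation_Weber}), with the $BUC$-bound obtained from the embedding \eqref{eq:BUCEmbedding} on the fixed interval $[0,T_0]$. Your write-up just makes explicit the bounded right inverse of the trace map and the $T$-independence of the constants.
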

\begin{proof}
The result is well-known. In the case $u(0)=0$, one can prove the result with the aid of the extension operator defined by
\begin{align*}
(Eu) (t) := 
\begin{cases}
u(t) & \text{ if } t \in [0,T], \\
u(2T - t)  & \text{ if } t \in (T, 2T], \\
0 & \text{ if } t \in (2T, T_0] .
\end{cases}
\end{align*}
The case $u(0)\neq 0$ can be easily reduced to the case $u(0)=0$ by substracting a suitable extension of $u_0$ to $[0,\infty)$. We refer to \cite[Lemma~5.2]{Dissertation_Weber} for the details.
\end{proof}

The last preparation before we can start with the existence proof is the definition of the function spaces $X_T := X_T^1 \times X_T^2$ and $Y_T$ by
\begin{align*}
X_T^1 & := \{ \ve \in  Z^1_T |  \ \ve_{|t=0} = \ve_0 \} , \\
X_T^2 & := \{ \varphi \in Z^2_T | \ \varphi_{| t = 0} = \varphi_0 \} , \\
Y_T & := Y^1_T \times Y^2_T := L^2 (0,T; L^2_\sigma (\Omega)) \times L^p (0,T; L^p (\Omega)) ,
\end{align*}
where 
\begin{align*}
\ve_0 \in (L^2_\sigma (\Omega), H^2 (\Omega)^d \cap H^1_0 (\Omega)^d \cap L^2_\sigma (\Omega))_{\frac{1}{2}, 2} = H^1_0 (\Omega)^d \cap L^2_\sigma (\Omega)
\end{align*}
and 
\begin{align*}
\varphi_0 \in (L^p (\Omega), W^4_{p,N} (\Omega) )_{1 - \frac{1}{p}, p}
\end{align*}
are the initial values from (\ref{equation_strong_solutions_inital_data_v}).
Note that in the space $X^2_T$ we have to ensure that $\varphi_{|t=0} = \varphi_0 \in [-1,1]$ since we will use this property to show the Lipschitz continuity of $\mathcal F : X_T \rightarrow Y_T$ in Proposition \ref{strong_solution_proposition_lipschitz_continuity_F}.
Moreover, we note that $X_T$ is not a vector space due to the condition $\varphi_{|t=0} = \varphi_0$. It is only an affine linear subspace of $Z_T := Z^1_T \times Z^2_T$.

\begin{proposition}\label{strong_solution_proposition_lipschitz_continuity_F}
Let the Assumptions \ref{strong_solutions_general_assumptions} hold and $\varphi_0$ be given as in  Theorem \ref{strong_solution_existence_proof_of_strong_solution}. Then there is a constant $C (T,R) > 0$ such that
\begin{align}\label{strong_solution_estimate_lipschitz_continuity_F}
\| \mathcal F (\bold v_1 , \varphi_1 ) - \mathcal F (\bold v_2 , \varphi_2) \| _{Y_T} \leq C (T, R) \|(\bold v_1 -  \bold v_2, \varphi_1 - \varphi_2 )\|_{X_T}
\end{align}
for all $(\bold v_i, \varphi_i) \in X_T$ with $\|(\bold v_i, \varphi_i)\|_{X_T} \leq R$ and $i = 1,2$. Moreover, it holds $C(T,R) \rightarrow 0$ as $T \rightarrow 0$.
\end{proposition}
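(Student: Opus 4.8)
The plan is to establish the Lipschitz estimate term-by-term, exploiting the algebraic structure of $\mathcal{F}$ and the fact that $Y_T$ is a product of $L^2(0,T;L^2)$ and $L^p(0,T;L^p)$ spaces. The key observation is that we have a genuine difference $(\mathbf{v}_1-\mathbf{v}_2, \varphi_1-\varphi_2)$ to bound, so I would repeatedly write each nonlinear term as a telescoping sum and apply the product and composition estimates of Theorem~\ref{theorem_composition_sobolev_functions}. The crucial embeddings come from Lemma~\ref{lemma_embedding_constant_does_not_depend_on_T}: because of the special norms $\|\cdot\|'_{Z^i_T}$, we have $\mathbf{v}_i \in BUC([0,T]; H^1_0\cap L^2_\sigma)$ and $\varphi_i \in BUC([0,T]; (L^p, W^4_{p,N})_{1-1/p,p})$ with embedding constants \emph{independent of $T$}. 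Since $4<p<6$ and $d\leq 3$, the trace space for $\varphi$ embeds into $W^3_p(\Omega) \hookrightarrow C^2(\overline{\Omega})$, so $\varphi_i$, $\nabla\varphi_i$, $\Delta\varphi_i$ are all uniformly bounded in space and time; this is what makes the composition terms $W'(\varphi)$, $m(\varphi)$, $\eta(\varphi)$ tractable.

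\medskip
\noindent
First I would group the terms of $\mathcal{F}$ by their analytic character. The second component mixes the transport term $\nabla\varphi\cdot\mathbf{v}$, the Cahn--Hilliard nonlinearity $\di(\tfrac1\varepsilon m(\varphi)\nabla W'(\varphi))$, and the difference $\varepsilon m(\varphi_0)\Delta^2\varphi - \varepsilon\di(m(\varphi)\nabla\Delta\varphi)$, in which the leading $\Delta^2\varphi$ cancels to leave only lower-order terms $-\varepsilon\big(m(\varphi)-m(\varphi_0)\big)\Delta^2\varphi$ plus first- and third-order contractions like $m'(\varphi)\nabla\varphi\cdot\nabla\Delta\varphi$. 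For the first component, $F_1$ contains $(\rho_0-\rho)\partial_t\mathbf{v}$, the viscosity difference $\di\big((2\eta(\varphi)-2\eta(\varphi_0))D\mathbf{v}\big)$, the capillary term $\varepsilon\Delta\varphi\nabla\varphi$, and the convection-type term involving $(\cdots)\cdot\nabla)\mathbf{v}$. In each case I would differentiate $\mathcal{F}(\mathbf{v}_1,\varphi_1)-\mathcal{F}(\mathbf{v}_2,\varphi_2)$ by adding and subtracting mixed terms so that every summand carries exactly one factor that is a difference, estimated in $X_T$, while the remaining factors are bounded uniformly via the $BUC$ embeddings and Theorem~\ref{theorem_composition_sobolev_functions} (applied to the differences $m(\varphi_1)-m(\varphi_2)$ etc., which are Lipschitz in $\|\varphi_1-\varphi_2\|$).

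\medskip
\noindent
The mechanism for producing the factor $C(T,R)\to 0$ is interpolation in time combined with H\"older's inequality in $t$. For any term that is already bounded in a higher integrability-in-time space than required by $Y_T$, pulling out a factor $T^\alpha$ with $\alpha>0$ via H\"older gives the decay. Concretely, Lemma~\ref{lemma_hoelder_sobolev_interpolation_to_hoelder} lets me trade a small loss of spatial regularity for H\"older continuity in time of the $BUC$-bounded quantities, so that on a short interval the sup-norms of $\varphi_i$, $\nabla\varphi_i$, etc.\ are close to their $t=0$ values (which are fixed by the initial data) and the \emph{differences} of the coefficients gain a positive power of $T$. The terms $(\rho_0-\rho)\partial_t\mathbf{v}$ and $-\varepsilon(m(\varphi)-m(\varphi_0))\Delta^2\varphi$ are the clearest: since $\varphi(0)=\varphi_0$ for every element of $X^2_T$, the differences $\rho-\rho_0$ and $m(\varphi)-m(\varphi_0)$ vanish at $t=0$ and are H\"older-in-$t$ with a $T^\alpha$-small sup-norm, directly yielding a $C(T,R)\to 0$ prefactor while $\partial_t\mathbf{v}_i\in L^2(0,T;L^2)$ and $\Delta^2\varphi_i\in L^p(0,T;L^p)$ supply the remaining norm.

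\medskip
\noindent
\textbf{Main obstacle.} The hardest term is the convection-type contribution in $F_1$, namely the part $\big(\tfrac{\tilde\rho_1-\tilde\rho_2}{2}m(\varphi)\nabla(\tfrac1\varepsilon W'(\varphi)-\varepsilon\Delta\varphi)\cdot\nabla\big)\mathbf{v}$, because it couples the \emph{third} spatial derivative $\nabla\Delta\varphi$ with the first derivative $\nabla\mathbf{v}$ and must still land in $L^2(0,T;L^2)$. Here I cannot afford crude $L^\infty$ bounds on all factors simultaneously; instead I would place $\nabla\Delta\varphi_i$ in a mixed space $L^r(0,T;L^s)$ obtained by interpolating $Z^2_T$, place $\nabla\mathbf{v}_i$ in a complementary $L^{r'}(0,T;L^{s'})$ from $Z^1_T$, and verify that the Sobolev exponents close (this is exactly where the constraint $4<p<6$ and $d\leq 3$ are used). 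The delicate bookkeeping is to split the difference across both the $m(\varphi)\nabla\Delta\varphi$ factor and the $\nabla\mathbf{v}$ factor while keeping every remaining factor uniformly bounded and extracting a positive power of $T$; I expect this to be the longest and most technical step, and the one dictating the admissible range of $p$.
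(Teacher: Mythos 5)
Your proposal follows essentially the same route as the paper: telescoping each nonlinearity so that exactly one factor is a difference, bounding the remaining factors via the $T$-independent $BUC$ embeddings and the composition estimates of Theorem~\ref{theorem_composition_sobolev_functions}, extracting the $C(T,R)\to 0$ factor from H\"older's inequality in time together with the H\"older-in-time continuity of $\varphi$ (Lemma~\ref{lemma_hoelder_sobolev_interpolation_to_hoelder}) and the fact that $\varphi_1(0)=\varphi_2(0)=\varphi_0$, and handling the critical $m(\varphi)\nabla\Delta\varphi\cdot\nabla\mathbf{v}$ term by placing the factors in complementary mixed $L^r(0,T;L^s)$ spaces (the paper uses $L^\infty(0,T;L^4)$ against $L^{8/3}(0,T;L^4)$). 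The plan is correct and matches the paper's proof in all essential points.
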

The proposition is proved in Section~\ref{sec:Lipschitz} below.

\begin{theorem}\label{thm:linear}
  Let $T>0$ and $\mathcal{L}$, $X_T$ and $Y_T$ be defined as before. Then $\mathcal{L}\colon X_T\to Y_T$ is invertible. Moreover, for every $T_0>0$ there is a constant $C(T_0)>0$ such that
  \begin{equation*}
    \|\mathcal{L}^{-1}\|_{\mathcal{L}(Y_T,X_T)}\leq C(T_0)\qquad \text{for all }T\in (0,T_0].
  \end{equation*}
\end{theorem}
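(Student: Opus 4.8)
The plan is to exploit that $\mathcal{L}$ is block diagonal. Reading off the definition, the first component of $\mathcal{L}(\ve,\varphi)$ involves only $\ve$ and the second only $\varphi$, because the coefficients $\rho_0=\rho(\varphi_0)$, $\eta(\varphi_0)$ and $m(\varphi_0)$ are fixed functions of $x$ alone. Thus $\mathcal{L}=\mathrm{diag}(\mathcal{L}_1,\mathcal{L}_2)$ with
\[
\mathcal{L}_1\ve:=\mathbb{P}_\sigma(\rho_0\partial_t\ve)-\mathbb{P}_\sigma(\di(2\eta(\varphi_0)D\ve)),\qquad \mathcal{L}_2\varphi:=\partial_t\varphi+\eps\, m(\varphi_0)\Delta^2\varphi,
\]
mapping $X^1_T\to Y^1_T$ and $X^2_T\to Y^2_T$ respectively. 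It therefore suffices to show that each $\mathcal{L}_i$ is invertible with a norm bound uniform for $T\in(0,T_0]$. Since $X_T$ fixes the initial data, $\mathcal{L}^{-1}$ is an affine map; the operator norm in the statement is that of its linear part, namely the solution operator for vanishing initial values, which is exactly what enters the contraction argument.

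For the velocity block I would simply invoke the maximal $L^2$-regularity result for the Stokes-type system established in Section~\ref{sec:Linear}: for every $f\in Y^1_T$ and every $\ve_0\in(L^2_\sigma(\Omega),H^2(\Omega)^d\cap H^1_0(\Omega)^d\cap L^2_\sigma(\Omega))_{1/2,2}=H^1_0(\Omega)^d\cap L^2_\sigma(\Omega)$ there is a unique $\ve\in Z^1_T$ with $\mathcal{L}_1\ve=f$ and $\ve(0)=\ve_0$, together with the attendant a priori estimate. This is the genuinely non-standard block, as the density $\rho_0$ multiplies $\partial_t\ve$ inside the Helmholtz projection, and it is precisely what is deferred to Section~\ref{sec:Linear}; here one only cites it.

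For the Cahn--Hilliard block $\mathcal{L}_2$ the plan is to appeal to maximal $L^p$-regularity of the fourth-order operator $A:=\eps\, m(\varphi_0)\Delta^2$ on $L^p(\Omega)$ with domain $W^4_{p,N}(\Omega)$. Since $\varphi_0\in(L^p(\Omega),W^4_{p,N}(\Omega))_{1-\frac1p,p}\hookrightarrow C(\overline{\Omega})$ for $d\le3$, $4<p<6$, and $m\in C^5_b(\R)$ with $m\ge m_0>0$, the leading coefficient $\eps\, m(\varphi_0)$ is continuous and bounded away from zero; the principal symbol $\eps\, m(\varphi_0)(x)|\xi|^4$ is elliptic and the conditions $\partial_n\varphi=\partial_n\Delta\varphi=0$ satisfy Lopatinskii--Shapiro. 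Hence the standard theory ($R$-sectoriality, resp.\ bounded $\mathcal{H}^\infty$-calculus, for higher-order elliptic operators with continuous top-order coefficients, after a shift $A+\omega$ that leaves the finite-interval statement unaffected) yields that $\varphi\mapsto(\mathcal{L}_2\varphi,\varphi(0))$ is an isomorphism $Z^2_T\to Y^2_T\times(L^p(\Omega),W^4_{p,N}(\Omega))_{1-\frac1p,p}$, and thus invertibility of $\mathcal{L}_2\colon X^2_T\to Y^2_T$.

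The only point internal to this proof requiring care is uniformity in $T$, since the two maximal-regularity statements come with a constant attached to a fixed reference interval. The plan is to argue by extension and restriction. Given $f\in Y_T$, extend it by zero to $\tilde f\in Y_{T_0}$, so that $\|\tilde f\|_{Y_{T_0}}=\|f\|_{Y_T}$; as the coefficients are time-independent the problem $\mathcal{L}\tilde u=\tilde f$ on $(0,T_0)$ with the prescribed initial data is well posed, giving $\|\tilde u\|_{X_{T_0}}\le C(T_0)(\|f\|_{Y_T}+\|\tilde u(0)\|)$. Restricting $u:=\tilde u|_{(0,T)}$ and using that the norms $\|\cdot\|'_{Z^i_T}$ consist of $L^p$-in-time norms plus the $t=0$ interpolation term, hence are monotone under shrinking the interval, gives $\|u\|_{X_T}\le\|\tilde u\|_{X_{T_0}}$. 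This is the mechanism quantified in Lemma~\ref{lemma_embedding_constant_does_not_depend_on_T}, and it delivers $\|\mathcal{L}^{-1}\|_{\mathcal{L}(Y_T,X_T)}\le C(T_0)$ for all $T\in(0,T_0]$. The main obstacle is thus entirely contained in the velocity block treated in Section~\ref{sec:Linear}; the present theorem reduces to assembling that result with standard fourth-order maximal $L^p$-regularity and the uniform-in-$T$ bookkeeping above.
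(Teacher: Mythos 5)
Your decomposition of $\mathcal{L}$ into the two diagonal blocks, your treatment of the Cahn--Hilliard block by maximal $L^p$-regularity for $\partial_t+\eps\,m(\varphi_0)\Delta^2$ with the given boundary conditions (the paper cites Denk--Hieber--Pr\"uss for exactly this), and your extension-by-zero/restriction argument for the uniformity of the bound in $T\in(0,T_0]$ all match the paper's proof. The problem is the velocity block. The statement you are asked to prove \emph{is} the content of Section~\ref{sec:Linear}; the solvability of
\begin{equation*}
  \mathbb{P}_\sigma(\rho_0\partial_t\ve)-\mathbb{P}_\sigma\bigl(\di(2\eta(\varphi_0)D\ve)\bigr)=f,\qquad \ve(0)=\ve_0,
\end{equation*}
with maximal $L^2$-regularity is precisely the non-standard assertion that Theorem~\ref{thm:linear} encapsulates, and writing ``invoke the result established in Section~\ref{sec:Linear}; here one only cites it'' is circular. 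You correctly identify this as the genuinely hard block --- the weighted time derivative $\mathbb{P}_\sigma(\rho_0\partial_t\ve)$ prevents a direct appeal to standard Stokes maximal regularity --- but you then leave it unproven, so the core of the theorem is missing from your argument.

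For comparison, the paper handles this block by recasting the equation as a degenerate evolution inclusion $\tfrac{d}{dt}(\mathcal{B}\ve)+A\ve\ni f$ with $\mathcal{B}\ue=\mathbb{P}_\sigma(\rho_0\ue)$ on $E=L^2_\sigma(\Omega)$ and $A\ue=-\mathbb{P}_\sigma\di(2\eta(\varphi_0)D\ue)$, proves that $A$ is the subdifferential of the convex, lower semicontinuous functional $\psi(\ue)=\int_\Omega\eta(\varphi_0)\,D\ue:D\ue\,dx$ with $\mathcal{D}(A)=H^2(\Omega)^d\cap H^1_0(\Omega)^d\cap L^2_\sigma(\Omega)$ (Lemma~\ref{strong_solution_A_coincides_partial_phi}), and then applies Showalter's theorem on equations of the form $\tfrac{d}{dt}\mathcal{B}u+\partial\psi(u)\ni f$ to get $\ve\in W^1_2(0,T;L^2_\sigma(\Omega))\cap L^\infty(0,T;H^1_0(\Omega)^d)$ with $\ve(t)\in\mathcal{D}(A)$ a.e. The $L^2(0,T;H^2(\Omega)^d)$ bound is then recovered a posteriori from the elliptic regularity estimate $\|\ve(t)\|_{H^2(\Omega)}\leq C(R)\|\rho_0\partial_t\ve(t)-f(t)\|_{L^2(\Omega)}$ for the stationary Stokes system with variable viscosity (Lemma~\ref{strong_solution_abels_rational_mech_lemma_4}). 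Both the subdifferential identification (which itself uses that same elliptic regularity lemma to show $\mathcal{D}(\partial\psi)\subseteq H^2$) and the choice of the weighted pairing $|\ue|_b^2=\int_\Omega\rho_0|\ue|^2\,dx$ are substantive steps that your proposal would need to supply, or replace by some other argument, before it constitutes a proof.
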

This theorem is proved in Section~\ref{sec:Linear} below.

\noindent
\emph{Proof of Theorem \ref{strong_solution_existence_proof_of_strong_solution}:}
First of all we note that  \eqref{equation_strong_solutions_1}-\eqref{equation_strong_solutions_2} is equivalent to
\begin{align}
&(\ve, \varphi) = \mathcal L^{-1} (\mathcal F (\ve, \varphi))  && \text{in } X_T . \label{strong_solution_fix_point_equation2}
\end{align}
The fact that $\mathcal L$ is invertible will be proven later.
Equation (\ref{strong_solution_fix_point_equation2}) implies that we have rewritten the system to a fixed-point equation which we want to solve by using the Banach fixed-point theorem.

To this end, we consider some $(\tilde \ve, \tilde \varphi) \in X_T$ and define
\begin{align*}
M := \| \mathcal L^{-1 } \circ \mathcal F (\tilde \ve, \tilde \varphi)\|_{X_T} < \infty .
\end{align*}
Now let $R > 0$ be given such that $(\tilde \ve, \tilde \varphi) \in \overline{B_R^{X_T} (0)}$ and $R > 2M$.
Then it follows from Proposition \ref{strong_solution_proposition_lipschitz_continuity_F} that there exists a constant $C = C(T, R) > 0$ such that
\begin{align*}
\| \mathcal F (\ve_1 , \varphi_1 ) - \mathcal F (\ve_2 , \varphi_2) \| _{Y_T} \leq C (T, R) \|(\ve_1, \varphi_1)  - ( \ve_2, \varphi_2)\|_{X_T}
\end{align*}
for all $(\ve_i, \varphi_i) \in X_T$ with $\|(\ve_i, \varphi_i)\|_{X_T} \leq R$, $j = 1,2$, where it holds $C(T,R) \rightarrow 0$ as $T \rightarrow 0$.
Furthermore, we choose $T$ so small that 
\begin{align*}
\|\mathcal L^{-1}\|_{\mathcal L (Y_T, X_T)} C (T,R) < \frac{1}{2} .
\end{align*}
Here we have to ensure that $\|\mathcal L^{-1}\|_{\mathcal L (Y_T, X_T)} $ does not converge to $+ \infty$ as $T \rightarrow 0$. But since Lemma \ref{strong_solution_lemma_l_inverse_t_t_0_firstpart} and Lemma \ref{strong_solution_lemma_l_inverse_t_t_0_secondpart} below yield  $\|\mathcal L^{-1}\|_{\mathcal L (Y_T, X_T)}  < C (T_0)$ for every $0 < T < T_0$ and for a constant that does not depend on $T$, this is not the case and we can choose $T > 0$ in such a way that the previous estimate holds.
Note that $T$ depends on $R$ and in general $T$ has to become smaller the larger we choose $R$. 
\\
Since we want to apply the Banach fixed-point theorem on $\overline{B^{X_T}_R (0)} \subseteq X_T$ as we only consider functions $(\ve, \varphi) \in X_T$ which satisfy $\|(\ve, \varphi)\|_{X_T} \leq R$, we have to show that $\mathcal L^{-1} \circ \mathcal F$ maps from $\overline{B^{X_T}_R (0)}$ to $\overline{B^{X_T}_R (0)}$.

From the considerations above we know that there exists $(\tilde \ve, \tilde \varphi) \in \overline{B^{X_T}_R (0)}$ such that 
\begin{align}\label{strong_solution_tilde_v0_tilde_phi0_bounded}
\|\mathcal L^{-1} \circ \mathcal F (\tilde \ve, \tilde \varphi) \|_{X_T} = M < \frac{R}{2} .
\end{align}
Then a direct calculation shows
\begin{align*}
\|\mathcal L ^{-1} \circ \mathcal F (\ve, \varphi) \|_{X_T} & \leq \| \mathcal L ^{-1} \circ \mathcal F ( \ve, \varphi) - \mathcal L ^{-1} \circ \mathcal F (\tilde \ve, \tilde \varphi) \|_{X_T} + \|   \mathcal L^{-1} \circ \mathcal F (\tilde \ve, \tilde \varphi) \|_{X_T} \\
& < \|\mathcal L^{-1} \|_{\mathcal L (Y_T, X_T)} \| \mathcal F (\ve, \varphi) - \mathcal F (\tilde \ve , \tilde \varphi) \|_{Y_T} + \frac{R}{2} \\
& \leq  \|\mathcal L^{-1} \|_{\mathcal L (Y_T, X_T)} C (R,T) \|(\ve, \varphi) - (\tilde \ve, \tilde \varphi)\|_{X_T} + \frac{R}{2}  <  R 
\end{align*}
for every $(\ve, \varphi) \in \overline{B^{X_T}_R (0)}$, where we used the estimate for the Lipschitz continuity of $\mathcal F$.
This shows that $\mathcal L ^{-1} \circ \mathcal F (\ve, \varphi)$ is in $ \overline{B^{X_T}_R (0)}$ for every $(\ve, \varphi) \in \overline{ B^{X_T} _R (0)}$, i.e.,
\begin{align*}
\mathcal L^{-1 } \circ \mathcal F :  \overline{ B^{X_T}_R (0) } \rightarrow  \overline{ B^{X_T} _R (0) } .
\end{align*}
For applying the Banach fixed-point theorem it remains to show that the mapping $\mathcal L^{-1 } \circ F \colon B^{X_T}_R (0) \rightarrow B^{X_T} _R (0) $ is a contraction. To this end, let $(\ve_i, \varphi_i) \in B^{X_T}_R (0)$ be given for $i = 1,2$. Then it holds
\begin{align*}
\| \mathcal L^{-1} & \circ \mathcal F (\ve_1, \varphi_1)  - \mathcal L^{-1} \circ \mathcal F  (\ve_2 , \varphi_2) \|_{X_T} \\
& \leq \| \mathcal L^{-1}\|_{\mathcal L (Y_T, X_T)} C(R,T) \|(\ve_1, \varphi_1) - (\ve_2, \varphi_2) \|_{X_T} \\
& < \frac{1}{2} \|(\ve_1, \varphi_1) - (\ve_2, \varphi_2) \|_{X_T}  ,
\end{align*}
which shows the statement. Hence, the Banach fixed-point theorem can be applied and yields some $(\ve, \varphi) \in \overline{B^{X_T}_R (0)} \subseteq X_T$ such that the fixed-point equation (\ref{strong_solution_fix_point_equation2}) holds, which implies that $(\ve, \varphi)$ is a strong solution for the equations \eqref{equation_strong_solutions_1}-\eqref{equation_strong_solutions_2}.

Finally, in order to show uniqueness in $X_T$, let $(\hat \ve, \hat \varphi)\in X_T$ be another solution. Choose $\hat{R}\geq R$ such that $(\hat \ve, \hat \varphi)\in \overline{B_{\hat{R}}^{X_T}(0)}$. Then by the previous arguments we can find some $\hat{T}\in (0,T]$ such that (\ref{strong_solution_fix_point_equation2}) has a unique solution. This implies $(\hat \ve, \hat \varphi)|_{[0,\hat{T}]}= (\ve, \varphi)|_{[0,\hat{T}]}$. A standard continuation argument shows that the solutions coincide for all $t\in [0,T]$.


\section{Lipschitz Continuity of $\mathcal F$}\label{sec:Lipschitz}

Before we continue we study in which Banach spaces $\ve$, $\varphi$, $\nabla \varphi$, $m (\varphi)$ and so on are bounded.

Note that in the definition of $X^2_T$, $p$ has to be larger than $4$ because we will need to estimate terms like $\nabla \Delta \varphi  \cdot \nabla \ve$, where $p = 2$ is not sufficient for the analysis and therefore we need to choose $p > 2$. 
But for most terms in the analysis $p=2$ would be sufficient and $4 < p < 6$ would not be necessary. Nevertheless, for consistency all calculations are done for the case $4 < p < 6$.

Due to \eqref{eq:BUCEmbedding} it holds
\begin{align}\label{strong_solution_v_buc_h1}
\ve \in X^1_T \hookrightarrow BUC ([0,T]; B^1_{22} (\Omega)) = BUC ([0,T]; H^1 (\Omega)) ,
\end{align}
where we used $B^s_{22} (\Omega) = H^s_2 (\Omega)$ for every $s \in \mathbb R$. In particular this implies
\begin{align}
&\nabla \ve \in L^\infty (0,T; L^2 (\Omega)) \cap L^2 (0,T; L^6 (\Omega)) \hookrightarrow L^{\frac{8}{3}} (0,T; L^4 (\Omega)) , \label{strong_solution_nabla_v_l83_l4}  \\
& \nabla \ve \in L^\infty (0,T; L^2 (\Omega)) \cap L^2 (0,T; L^6 (\Omega)) \hookrightarrow L^4 (0,T; L^3 (\Omega)) \label{strong_solution_nabla_v_l4_l3}.
\end{align}

Let $\varphi \in  X^2_T$ be given. From  it \eqref{eq:BUCEmbedding} follows
\begin{align}\label{strong_solution_varphi_buc_W4-4p_p}
\varphi  \in  L^p  (0,T; W^4_{p,N} (\Omega) ) \cap W^1_p (0,T; L^p (\Omega)) \hookrightarrow  BUC ([0,T]; W^{4 - \frac{4}{p}}_p (\Omega)) .
\end{align}
This implies
\begin{align}\label{strong_solution_nabla_delta_phi}
\nabla \Delta \varphi \in BUC ([0,T]; W^{1 - \frac{4}{p}}_p (\Omega)) 
\end{align}
since  $p > 4$.
Note that when we write ``$\varphi$ is bounded in $Z$" for some function space $Z$, we mean that the set of all functions $\{\varphi \in X_T^2 : \ \|\varphi\|_{X^2_T} \leq R\}$ is bounded in $Z$ in such a way that the upper bound only depends on $R$ and not on $T$, i.e., there exists $C(R) > 0$ such that $\|\varphi\|_{Z} \leq C(R)$ for every $\varphi \in X_T^2$ with $\|\varphi\|_{X^2_T} \leq R$.

First of all, we have
\begin{align*}
\varphi \in W^1_p (0,T; L^p (\Omega)) \hookrightarrow C^{0, 1 - \frac{1}{p}} ([0,T]; L^p (\Omega)) .
\end{align*}
Moreover, we already know that $\varphi \in BUC([0,T]; W^{4 - \frac{4}{p}}_p (\Omega))$ and we have
\begin{align*}
(B^{4 - \frac{4}{p}}_{pp}  (\Omega), L^p(\Omega))_{\theta, 2} = B^3_{p2} (\Omega) \hookrightarrow W^3_p (\Omega) 
\end{align*}
together with the estimate
\begin{align*}
\|\varphi (t)\|_{W^3_p (\Omega)} \leq C \|\varphi (t)\|^{1 - \theta}_{W^{4 - \frac{4}{p}}_p (\Omega)} \|\varphi (t)\|^\theta_{L^p (\Omega)} 
\end{align*}
for every $t \in [0,T]$. Hence, Lemma \ref{lemma_hoelder_sobolev_interpolation_to_hoelder} implies
\begin{align}\nonumber
  \varphi \in  &C^{0, 1 - \frac{1}{p}} ([0,T]; L^p (\Omega)) \cap  C([0,T]; W^{4 - \frac{4}{p}}_p (\Omega))\\\label{phi_hoelder_continuous_in_time_w3p}
  &\hookrightarrow C^{0, (1 - \frac{1}{p}) \theta} ([0,T]; W^3_p (\Omega)) .
\end{align}
Because of $W^3_p (\Omega) \hookrightarrow C^2 (\overline \Omega)$ for $d=2,3$ due to $4<p<6$, we obtain that
\begin{align}\label{phi_continuous_in_time_twice_in_omega}
\varphi &\text{ is bounded in } C([0,T]; C^2 (\overline \Omega))   .
\end{align}

In the nonlinear operator $\mathcal F \colon X_T \rightarrow Y_T$ the terms $\eta (\varphi)$, $\eta (\varphi_0)$, $m (\varphi)$, $m (\varphi_0)$ and $W' (\varphi)$ appear. 
Hence, we need to know in which spaces these terms are bounded in the sense that there is a constant $C(R) > 0$, which does not depend on $T$, such that the norms of these terms in a certain Banach space are bounded by $C(R)$ for every $(\ve, \varphi) \in X_T$ with $\|(\ve, \varphi)\|_{X_T} \leq R$.

Due to (\ref{phi_hoelder_continuous_in_time_w3p}) and because the embedding constant only depends on $R$, it holds 
\begin{align*}
\|\varphi (t)\|_{W^3_p (\Omega)} \leq C(R)
\end{align*}
for every $t \in [0,T]$ and $\varphi \in X_T^2$ with $\|\varphi\|_{X^2_T} \leq R$. Hence Theorem~\ref{theorem_composition_sobolev_functions} yields
\begin{align*}
\|f(\varphi (t))\|_{W^3_p (\Omega)}, \|f (\varphi_0)\|_{W^3_p (\Omega)}, \|W' (\varphi (t))\|_{W^3_p (\Omega)} \leq C(R)
\end{align*}
for every $t \in [0,T]$ and every $\varphi \in X^2_T$ with $\|\varphi\|_{X^2_T} \leq R$, where $ f \in \{\eta, m\}$. Thus
\begin{align}\label{f_of_phi_bounded_in_infty_w3p}
f (\varphi), f(\varphi_0), f'(\varphi), W' (\varphi)  \text{ are bounded in } L^\infty (0,T; W^3_p (\Omega))  
\end{align}
for $f \in \{\eta, m\}$.
Moreover, Theorem \ref{theorem_composition_sobolev_functions} yields the existence of $L > 0$ such that
\begin{align}\label{lipschitz_f_of_phi_w3p}
\|f (\varphi_1 (t)) - f (\varphi_2 (t))\|_{W^3_p (\Omega)} & \leq L \|\varphi_1 (t) - \varphi_2 (t)\|_{W^3_p (\Omega)} 
\end{align}
for every $t \in [0,T]$, $\varphi_1, \varphi_2 \in X^2_T$ and $f \in  \{ \eta, m , W' \}$.

In the next step, we want to show that $f(\varphi)$ is bounded in $X_T^2$ and therefore the same embeddings hold as for $\varphi$, where $f \in \{\eta, m , W'\}$. Note that from now on until the end of the proof of the interpolation result for $f(\varphi)$, we always use some general $f \in C^4_b (\mathbb R)$. But all these embeddings are valid for $f \in \{\eta, m, W'\}$.
We want to prove that if it holds $\varphi \in X^2_T$ with $\|\varphi\|_{X^2_T} \leq R$, then there exists a constant $C(R) > 0$ such that $\|f(\varphi)\|_{X^2_T} \leq C(R)$. To this end, let $\varphi \in X^2_T$ be given with $\|\varphi\|_{X^2_T} \leq R$. Since we already know $\varphi \in C([0,T]; C^2 (\overline \Omega))$, cf. (\ref{phi_continuous_in_time_twice_in_omega}), we can conclude 
\begin{align*}
\|\varphi (t)\|_{C^2 (\overline \Omega)} \leq C (R)
\end{align*}
for all $t \in [0,T]$. Hence, it holds $f (\varphi (t)) \in C^2 (\overline \Omega)$ for every $t \in [0,T]$ and
\begin{align*}
\nabla f ( (\varphi (t)) = f' (\varphi (t)) \nabla \varphi (t) .
\end{align*}
Due to (\ref{f_of_phi_bounded_in_infty_w3p}), $f ' (\varphi)$ is bounded in $L^\infty (0,T; W^3_p (\Omega))$. In particular, this implies $\|f' (\varphi (t))\|_{W^3_p (\Omega)} \leq C(R)$ for a.e. $t \in (0,T)$ and a constant $C(R) > 0$. Since it holds $\varphi \in L^p (0,T; W^4_p (\Omega))$, it follows $\nabla \varphi (t) \in W^3_p (\Omega)$ for a.e. $t \in (0,T)$. Since $W^3_p(\Omega)$ is a Banach algebra,   we obtain $f' (\varphi (t)) \nabla \varphi (t) \in W^3_p (\Omega)$ for a.e. $t \in (0,T)$ together with the estimate
\begin{align*}
\|\nabla f (\varphi (t))\|_{W^3_p (\Omega)} = \|f' (\varphi (t)) \nabla \varphi (t)\|_{W^3_p (\Omega)} \leq C \|f ' (\varphi (t))\|_{W^3_p (\Omega)} \|\nabla \varphi (t)\|_{W^3_p (\Omega)}
\end{align*}
for a.e. $t \in (0,T)$ and every $\varphi \in X^2_T$ with $\|\varphi\|_{X^2_T} \leq R$. Since $f' (\varphi)$ is bounded in $L^\infty (0,T; W^3_p (\Omega))$ and $\nabla \varphi$ is bounded in $L^p (0,T; W^3_p (\Omega))$, the estimate above implies the boundedness of $\nabla f (\varphi)$ in $L^p (0,T; W^3_p (\Omega))$, i.e., there exists $C(R) > 0$ such that
\begin{align*}
\|\nabla f (\varphi)\|_{L^p (0,T; W^3_p (\Omega))} \leq C (R) \qquad \text{ for all } \varphi \in X^2_T \text{ with } \|\varphi\|_{X^2_T} \leq R. 
\end{align*}
Altogether this implies that
\begin{align*}
f(\varphi) \text{ is bounded in } L^p (0,T; W^4_p (\Omega)) .
\end{align*}
Analogously we can conclude from the boundedness of $\varphi $ in $W^1_p (0,T; L^p (\Omega))$ that $f (\varphi) $ is also bounded in $W^1_p (0,T; L^p (\Omega))$ because of 
$
\frac{d}{dt} f (\varphi (t)) = f' (\varphi (t)) \partial_t \varphi (t), 
$
where $f' (\varphi ) $ is bounded in $C^0 (\overline Q_T) $. Thus the same interpolation result holds as in (\ref{phi_hoelder_continuous_in_time_w3p}), i.e., 
\begin{align}\label{interpolation_f_of_phi_hoelder_continuous_and_w3p}
f(\varphi) \text{ is bounded in } C^{0, ( 1 - \frac{1}{p} ) \theta} ([0,T]; W^3_p (\Omega)) ,
\end{align}
where $\theta :=  \frac{\frac{4}{p} - 1}{ \frac{4}{p} - 4}$.

\noindent
\emph{Proof of Proposition~\ref{strong_solution_proposition_lipschitz_continuity_F}:}
Let $(\ve_i, \varphi_i ) \in X_T$ with $\|(\ve_i, \varphi_i)\|_{X_T} \leq R$, $i = 1,2$, be given. Then it holds
\begin{align}\label{equation_F_lipschitz}
 &\| \mathcal F (\ve_1 , \varphi_1 ) -  \mathcal F (\ve_2 , \varphi_2) \| _{Y_T}  = \| \mathbb  P_\sigma ( F_1 (\ve_1, \varphi_1 ) - F_1 (\ve_2, \varphi_2)) \|_{L^2 (Q_T)} \nonumber \\
& \ \   + \|(\nabla \varphi_2 \cdot \ve_2 - \nabla \varphi_1 \cdot \ve_1 )  + \tfrac{1}{\varepsilon} \di ( m (\varphi_1) \nabla W' (\varphi_1) - m (\varphi_2) \nabla W' (\varphi_2))  \nonumber \\
& \ \  + \varepsilon m (\varphi_0) \Delta^2 (\varphi_1 - \varphi_2)  + \varepsilon \di  ( m (\varphi_2) \nabla \Delta \varphi_2 - m (\varphi_1) \nabla \Delta \varphi_2)\|_{L^p (Q_T)} .
\end{align}
For the sake of clarity we study both summands in (\ref{equation_F_lipschitz}) separately and begin with the first one. Recall that the operator $F_1$ is defined by
\begin{align*}
F_1 (\ve, \varphi) = & \rho_0 \partial_t \ve - \rho \partial_t \ve - \di (2 \eta (\varphi_0) D\ve ) + \di ( 2 \eta (\varphi) D\ve )  - \varepsilon \Delta \varphi \nabla \varphi  \\
& - \left   (  \left ( \rho \ve + \tfrac{\tilde \rho_1   - \tilde \rho_2}{2} m(\varphi)  \nabla (\tfrac{1}{\varepsilon} W' (\varphi) - \varepsilon \Delta \varphi )  \right ) \cdot \nabla \  \right ) \ve 
\end{align*}
and that it holds $\|\mathbb P_\sigma \|_{\mathcal L (L^2 (\Omega)^d, L^2_\sigma (\Omega))} \leq 1$  for the Helmholtz projection $\mathbb P_\sigma$.
We estimate $ \| \mathbb P_\sigma ( F_1 (\ve_1, \varphi_1 ) - F_1 (\ve_2, \varphi_2 ) )\|_{L^2 (Q_T)}$:

For the first two terms we can calculate
\begin{align*}
\|\rho_0 & \partial_t \ve_1 - \rho (\varphi_1) \partial_t \ve_1 - \rho_0 \partial_t \ve_2 + \rho (\varphi_2) \partial_t \ve_2 \|_{L^2(Q_T)} \\
& \leq  \| (\rho_0 - \rho (\varphi_1)) \partial_t (\ve_1 - \ve_2) \|_{L^2 (Q_T)} + \|  ( \rho (\varphi_1) - \rho (\varphi_2)) \partial_t \ve_2  \|_{L^2 (Q_T)} .
\end{align*}
Since it holds $\partial_t \ve_i \in L^2 (0,T; L^2_\sigma (\Omega))$, $i = 1,2$, we need to estimate every $\rho$-term in the $L^\infty$-norm. To this end, we use that $\rho$ is affine linear and 
\begin{align*}
\varphi_i \text{ is bounded in } C^{0, (1 -\frac{1}{p})\theta} ([0,T]; W^3_p (\Omega)) \hookrightarrow  C^{0, (1 -\frac{1}{p})\theta} ([0,T]; C^2 ( \overline \Omega))  
\end{align*}
for $i = 1,2$ and $\theta = \frac{\frac{4}{p}-1}{\frac{4}{p}-4}$, cf. (\ref{phi_hoelder_continuous_in_time_w3p}). Then we obtain for the first summand
\begin{align*}
 \| (\rho_0 - \rho (\varphi_1))  \partial_t (\ve_1 - \ve_2) \|_{L^2 (Q_T)} &  \leq \|\rho (\varphi_0) - \rho (\varphi_1)\|_{L^\infty (Q_T)} \|\partial_t (\ve_1 - \ve_2)\|_{L^2 (Q_T)} \\
& \leq C  \underset{t \in [0,T]}{\sup} \|\varphi_1 (0) - \varphi_1 (t)\|_{L^\infty (\Omega)} \|\ve_1 - \ve_2\|_{X^1_T} \\
& \leq C T^{ (1 - \frac{1}{p} ) \theta} \|\varphi_1\|_{C^{0, (1 - \frac{1}{p}) \theta} ([0,T]; C^2 ( \overline \Omega))}  \|\ve_1 - \ve_2\|_{X^1_T} \\
& \leq C R  T^{ (1 - \frac{1}{p} ) \theta}  \|\ve_1 - \ve_2\|_{X^1_T} .
\end{align*}
 Analogously the second term can be estimated by
\begin{align*}
 \| & ( \rho (\varphi_1) -  \rho (\varphi_2)) \partial_t \ve_2  \|_{L^2 (Q_T)}  \leq \|\rho (\varphi_1) - \rho (\varphi_2)\|_{L^\infty (Q_T)} \|\ve_2\|_{X^1_T} \\
& \leq C \underset{t \in [0,T]}{\sup} \| (\varphi_1(t) - \varphi_2 (t)) - (\varphi_1 (0) - \varphi_2 (0))\|_{L^\infty (\Omega)}   \|\ve_2\|_{X^1_T} \\
& \leq C R T^{(1 - \frac{1}{p}) \theta}   \|\varphi_1 - \varphi_2\|_{C^{0, (1 - \frac{1}{p}) \theta} ([0,T]; C^2 ( \overline \Omega))} \\
& \leq C R T^{ (1 - \frac{1}{p}) \theta} \|\varphi_1 - \varphi_2\|_{X^2_T} .
\end{align*}
Here we used the fact that $\varphi_1 (0) = \varphi_0 = \varphi_2 (0)$ for $\varphi_i \in X^2_T$, $i = 1,2$.

The next term of $ \| \mathbb P_\sigma ( F_1 (\ve_1, \varphi_1 ) - F_1 (\ve_2, \varphi_2 ) )\|_{L^2 (Q_T)}$ is given by
\begin{align*}
|&|  ( \di (2 \eta  (\varphi_0) D\ve_2)  - \di (2 \eta (\varphi_0) D\ve_1 ) ) +  ( \di (2 \eta  (\varphi_1) D\ve_1)  - \di (2 \eta (\varphi_2) D\ve_2 ) ) \|_{Y^1_T}  \\
& \leq \|  \di (2  (\eta (\varphi_0) -  \eta (\varphi_1) ) ( D\ve_2 - D\ve_1 ) ) \|_{Y^1_T} + \| \di (2 ((\eta (\varphi_1) - \eta (\varphi_2)) D\ve_2 ) ) \|_{Y^1_T}  .
\end{align*}
In the next step we apply the divergence on the $\eta(\varphi_i)$- and $D\ve_i$-terms and for the sake of clarity we study both terms in the previous inequality separately. For the first one we use $\eta (\varphi) \in C^{0, (1 -\frac{1}{p}) \theta} ([0,T]; W^3_p (\Omega))$ with $\theta = \frac{\frac{4}{p}-1}{\frac{4}{p} - 4}$, cf. (\ref{interpolation_f_of_phi_hoelder_continuous_and_w3p}), to obtain
\begin{align*}
 \| & \di (2  (\eta (\varphi_0)  -  \eta (\varphi_1) ) ( D\ve_2 - D\ve_1 ) ) \|_{Y^1_T} \\
& \leq \| 2  \nabla  (\eta (\varphi_0) - \eta (\varphi_1 ) ) \cdot (D\ve_2 - D\ve_1)  \|_{Y^1_T} + \|  (\eta (\varphi_0) - \eta (\varphi_1)) \Delta (\ve_2 - \ve_1 )  \|_{Y^1_T} \\
& \leq C \underset{t \in [0,T]}{\sup} \|  \nabla \eta (\varphi_1 (0)) - \nabla \eta (\varphi_1 (t)) \|_{C^1 ( \overline \Omega)} \| D\ve_2 - D\ve_1 \|_{L^2 (0,T;  H^1 (\Omega))} \\
& \ \ \ + C \underset{t \in (0,T)}{\sup} \|   \eta (\varphi_1 (0)) -  \eta (\varphi_1 (t)) \|_{C^2 ( \overline \Omega)} \| \Delta (\ve_2 - \ve_1) \|_{L^2 (0,T; L^2 (\Omega))} \\
& \leq C  T^{ (1 - \frac{1}{p}) \theta}  \|\nabla \eta (\varphi_1)\|_{C^{0, (1 - \frac{1}{p}) \theta } ([0,T]; W^2_p (\Omega))} \|\ve_1 - \ve_2\|_{X^1_T} \\
& \ \ \ + C  T^{ (1 - \frac{1}{p}) \theta}  \|\eta (\varphi_1)\|_{C^{0, (1 - \frac{1}{p}) \theta} ([0,T]; W^3_p (\Omega))}  \|\ve_1 - \ve_2\|_{X^1_T} \\
& \leq C R \left ( T^{ (1 - \frac{1}{p}) \theta}+T^{ (1 - \frac{1}{p}) \theta} \right ) \|\ve_1 - \ve_2\|_{X^1_T} .
\end{align*}
Analogously as before we can estimate the second summand by
\begin{align*}
 \| & \di (2 ((\eta (\varphi_1) - \eta (\varphi_2)) D\ve_2 ) ) \|_{Y^1_T} \\
& \leq 2 \|   \eta ' (\varphi_1) ( \nabla \varphi_1 -  \nabla \varphi_2) \cdot D\ve_2 \|_{Y^1_T} + 2 \| ( \eta ' (\varphi_1) - \eta ' (\varphi_2) ) \nabla \varphi_2 \cdot D\ve_2 \|_{Y^1_T} \\
& \ \ \  +  \|( \eta (\varphi_1) - \eta (\varphi_2) ) \Delta \ve_2\|_{Y^1_T} .
\end{align*}
For the sake of clarity we study these three terms separately again. 
Firstly,
\begin{align*}
 \|&   \eta ' (\varphi_1)  ( \nabla \varphi_1 -  \nabla \varphi_2) \cdot D\ve_2 \|_{Y^1_T}  \leq C (R) \left | \left | \|D\ve_2\|_{L^2 (\Omega)} \|\nabla \varphi_1 - \nabla \varphi_2\|_{C^1 (\overline \Omega)} \right | \right | _{L^2 (0,T)} \\
& \leq C (R) \underset{t \in [0,T]}{\sup} \|\nabla (\varphi_1 (t) - \varphi_2 (t)) - \nabla (\varphi_1 (0) - \varphi_2 (0))\|_{C^1 (\overline \Omega)} \|D\ve_2\|_{L^2 (0,T; L^2 (\Omega))} \\
& \leq C (R) T^{(1 - \frac{1}{p}) \theta} \|\nabla \varphi_1 - \nabla \varphi_2\|_{C^{(1 - \frac{1}{p}) \theta} ([0,T]; W^2_p (\Omega))} \|\ve_2\|_{X^1_T} \\
& \leq C(R) T^{(1 - \frac{1}{p}) \theta} \|\varphi_1 - \varphi_2\|_{X^2_T} ,
\end{align*}
where we used in the first step that $\eta ' (\varphi) $ is bounded in $ C([0,T]; C^2 (\overline \Omega))$.
Furthermore, (\ref{lipschitz_f_of_phi_w3p}) together with 
\begin{align*}
\varphi \in  C^{0, (1 - \frac{1}{p}) \theta} ([0,T]; W^3_p (\Omega)) \hookrightarrow C([0,T]; C^2 (\overline \Omega))
\end{align*} 
implies
\begin{align*}
\|  & ( \eta ' (\varphi_1)  - \eta ' (\varphi_2) ) \nabla \varphi_2 \cdot D\ve_2 \|_{Y^1_T} \\
& \leq \underset{t \in [0,T]}{\sup}\|  \eta ' (\varphi_1)  - \eta ' (\varphi_2) \|_{W^3_p (\Omega)}  \|\nabla \varphi_2\|_{C([0,T];C^1 (\overline \Omega))} \|D\ve_2\|_{L^2 (Q_T)} \\
& \leq C (R)  \underset{t \in [0,T]}{\sup} \|\varphi_1 (t) - \varphi_2 (t) \|_{W^3_p (\Omega)} 
 \leq C (R) T^{(1- \frac{1}{p}) \theta} \|\varphi_1 - \varphi_2\|_{X^2_T} 
\end{align*}
since $\varphi_1(0)-\varphi_2(0)=0$.
Analogously to the second summand we can estimate the third one by
\begin{align*}
 \|( \eta (\varphi_1) - \eta (\varphi_2) ) \Delta \ve_2\|_{Y_T} \leq C(R) T^{(1 - \frac{1}{p}) \theta} \|\varphi_1 - \varphi_2\|_{X^2_T} ,
\end{align*}
which shows the statement for the second term.

 For the third term we obtain
\begin{align*}
\|\rho & (\varphi_2)  \ve_2  \cdot \nabla \ve_2 - \rho(\varphi_1) \ve_1 \cdot \nabla \ve_1\|_{Y^1_T} \\
& \leq  \|( \rho(\varphi_2) - \rho (\varphi_1)) \ve_2 \cdot \nabla \ve_2 \|_{Y^1_T} + \| \rho(\varphi_1)  (\ve_2 - \ve_1) \cdot \nabla \ve_2 \|_{Y^1_T} \\
& \ \ \ + \| \rho (\varphi_1) \ve_1  \cdot (\nabla \ve_2 - \nabla \ve_1 ))\|_{Y^1_T} .
\end{align*}
We estimate these three terms separately again. For the first term we use that $\ve_2$ is bounded in $L^\infty (0,T; L^6 (\Omega))$, cf. (\ref{strong_solution_v_buc_h1}), and $\nabla \ve_2$ is bounded in $L^2 (0,T; L^6 (\Omega))$ together with (\ref{lipschitz_f_of_phi_w3p}). Thus
\begin{align*}
 \| & ( \rho(\varphi_2) -  \rho (\varphi_1)) \ve_2 \cdot \nabla \ve_2 \|_{Y^1_T}  \\
& \leq C (R) T^{(1- \frac{1}{p}) \theta} \|\varphi_2  - \varphi_1 \|_{C^{0, (1 - \frac{1}{p}) \theta} ([0,T]; W^3_p (\Omega))} \|\ve_2\|_{L^\infty (0,T; L^6 (\Omega))} \|\nabla \ve_2\|_{L^2 (0,T; L^6 (\Omega))} \\
& \leq C(R) T^{(1 - \frac{1}{p} ) \theta} \|\varphi_2 - \varphi_1\|_{X^1_T}.
\end{align*}
For the second term we use $\rho (\varphi_1) \in C([0,T]; C^2 (\overline \Omega))$, $\ve_i \in L^\infty (0,T; L^6 (\Omega))$ and $\nabla \ve_2 \in L^4 (0,T; L^3 (\Omega))$, cf. (\ref{strong_solution_v_buc_h1}) and (\ref{strong_solution_nabla_v_l4_l3}), $i = 1,2$. Hence,
\begin{align*}
 \| \rho(\varphi_1)  (\ve_2 - \ve_1) \cdot \nabla \ve_2 \|_{Y^1_T} & \leq  C (R) T^\frac{1}{4} \|\ve_1 - \ve_2\|_{L^\infty (0,T; L^6 (\Omega))} \| \nabla \ve_2 \|_{L^4 (0,T; L^3 (\Omega))} \\
& \leq C (R) T^\frac{1}{4}  \|\ve_1 - \ve_2\|_{X^1_T}.
\end{align*}
For the third term we use the same function spaces. This implies
\begin{align*}
 \| \rho (\varphi_1) \ve_1  \cdot (\nabla \ve_2 - \nabla \ve_1 ))\|_{Y_T} &\leq C (R) T^{\frac{1}{4}} \|\nabla \ve_1 - \nabla \ve_2\|_{L^4 (0,T; L^3 (\Omega))} \\
& \leq  C (R) T^{\frac{1}{4}} \|\ve_1 - \ve_2\|_{X^1_T} .
\end{align*}

Since $ \frac{\tilde \rho_1 - \tilde \rho_2}{2} $ is a constant, we obtain
\begin{align*}
& \left | \left |  \tfrac{\tilde \rho_1 - \tilde \rho_2}{2}  m(\varphi_1) \nabla ( \Delta \varphi_1 ) \cdot \nabla \ve_1 - \tfrac{\tilde \rho_1 - \tilde \rho_2}{2} m(\varphi_2) \nabla ( \Delta \varphi_2 ) \cdot \nabla \ve_2 \right | \right |_{Y^1_T} \\
& \ \ \  \leq C \left (  \|  m(\varphi_1) \nabla ( \Delta \varphi_1 ) \cdot ( \nabla \ve_1 - \nabla \ve_2 ) \|_{Y^1_T}  \right . \\
& \ \ \ \ \  \ +  \|   m(\varphi_1) ( \nabla (\Delta \varphi_1) - \nabla (\Delta \varphi_2)) \cdot \nabla \ve_2 \|_{Y^1_T}  \\
& \ \ \ \ \  \ + \left . \|  (m(\varphi_1) - m (\varphi_2) )\nabla (\Delta \varphi_2) \cdot \nabla \ve_2 \|_{Y^1_T} \right ) .
\end{align*}
For the sake of clarity we study all three terms separately again. In the following we use $\nabla \Delta \varphi_i \in L^\infty (0,T; L^4 (\Omega))$, cf. (\ref{phi_hoelder_continuous_in_time_w3p}), $\nabla \ve_i \in L^{\frac{8}{3}} (0,T; L^4 (\Omega))$, cf. (\ref{strong_solution_nabla_v_l83_l4}), for $i = 1,2$, and $m (\varphi_1) \in C([0,T];C^2 (\overline \Omega))$. Altogether this implies
\begin{align*}
 \|  & m(\varphi_1) \nabla ( \Delta \varphi_1 ) \cdot ( \nabla \ve_1 - \nabla \ve_2 ) \|_{Y^1_T}  \\
 &\leq C T^\frac{1}{8} \|\nabla \Delta \varphi_1\|_{L^\infty (0,T; L^4 (\Omega))} \|\nabla \ve_1 - \nabla \ve_2\|_{L^{\frac{8}{3}} (0,T; L^4 (\Omega))} \\
& \leq C (R) T^\frac{1}{8}  \| \ve_1 - \ve_2\|_{X^1_T} .
\end{align*}
Analogously the second summand yields
\begin{align*}
 \|   m(\varphi_1) ( \nabla (\Delta \varphi_1) - \nabla (\Delta \varphi_2)) \cdot \nabla \ve_2 \|_{Y^1_T} & \leq C (R) T^\frac{1}{8} \|\varphi_1 - \varphi_2\|_{X^2_T} .
\end{align*}
For the last term we use $m (\varphi_i) \in C^{0, (1 - \frac{1}{p}) \theta} ([0,T]; W^3_p (\Omega)) \hookrightarrow C^0 ([0,T]; C^2 (\overline \Omega))$ together with (\ref{lipschitz_f_of_phi_w3p}) and obtain
\begin{align*}
 |&|  (m(\varphi_1) - m (\varphi_2) )\nabla (\Delta \varphi_2) \cdot \nabla \ve_2 \|_{Y^1_T} \\
& \leq C(R ) T^\frac{1}{8} \|\varphi_1 (t) - \varphi_2 (t)\|_{C^0([0,T]; C^2 (\overline \Omega))} \|\nabla \Delta \varphi_2\|_{L^\infty (0,T; L^4 (\Omega)} \|\nabla \ve_2\|_{L^\frac{8}{3} (0,T; L^4 (\Omega))} \\
& \leq  C(R) T^{\frac{1}{8} } \|\varphi_1 - \varphi_2\|_{X^2_T} .
\end{align*}

The next term has the same structure as the one before and can be estimated as
\begin{align}\label{strong_solution_nabla_w_prime}
& \left | \left |   \tfrac{\tilde \rho_1 - \tilde \rho_2}{2}  m(\varphi_1) \nabla ( W' (\varphi_1) ) \cdot \nabla \ve_1 - \tfrac{\tilde \rho_1 - \tilde \rho_2}{2} m(\varphi_2) \nabla (  W' (\varphi_2) ) \cdot \nabla \ve_2 \right | \right |_{Y^1_T}  \nonumber \\
& \leq C \left (  \|  m(\varphi_1) \nabla  W' ( \varphi_1) \cdot ( \nabla \ve_1 - \nabla \ve_2 ) \|_{Y^1_T}  \right . \nonumber \\
&  \ \ \ +  \|   m(\varphi_1) ( \nabla  W'( \varphi_1 ) - \nabla W' ( \varphi_2)) \cdot \nabla \ve_2 \|_{Y^1_T}   \nonumber \\
& \ \ \ + \left . \|  (m(\varphi_1) - m (\varphi_2) )\nabla W' (\varphi_2 ) \cdot \nabla \ve_2 \|_{Y^1_T} \right ) .
\end{align}
For $\nabla \ve_i$, $i = 1,2$, we use its boundedness in $L^4 (0,T; L^3 (\Omega))$, cf. (\ref{strong_solution_nabla_v_l4_l3}). Moreover, we know $\nabla W' (\varphi) \in C([0,T]; W^{3 - \frac{4}{p}}_p (\Omega))$ and $m ( \varphi) \in C([0,T]; C^2 (\overline \Omega))$ for $\varphi \in B_R^{X^2_T}$. 
Using all these bounds we can estimate the three terms in (\ref{strong_solution_nabla_w_prime}) separately. For the first term we obtain
\begin{align*}
 \|  m(\varphi_1) \nabla  W' ( \varphi_1) \cdot ( \nabla \ve_1 - \nabla \ve_2 ) \|_{Y^1_T}  & \leq C (R) T^\frac{1}{4} \|\nabla \ve_1 - \nabla \ve_2\|_{L^4 (0,T; L^3 (\Omega))}  \\
& \leq C (R) T^\frac{1}{4} \|\ve_1 - \ve_2\|_{X^1_T} .
\end{align*}

For the second summand in (\ref{strong_solution_nabla_w_prime}) we have to estimate the difference \linebreak $\nabla W' (\varphi_1) - \nabla W' (\varphi_2)$ in an appropriate manner. To this end, we use (\ref{phi_hoelder_continuous_in_time_w3p}), (\ref{lipschitz_f_of_phi_w3p}) and $W^2_p (\Omega) \hookrightarrow C^1 (\overline \Omega)$. Moreover, we use $\nabla \ve_2 \in L^4 (0,T; L^3 (\Omega))$, cf. (\ref{strong_solution_nabla_v_l4_l3}), and $m (\varphi ) \in C([0,T]; C^2 (\overline \Omega))$. Then it follows
\begin{align*}
\|  m(\varphi_1) & ( \nabla  W'( \varphi_1 ) - \nabla W' ( \varphi_2)) \cdot \nabla \ve_2 \|_{Y^1_T} \\
& \leq C (R) T^\frac{1}{4} \underset{t \in [0,T]}{\sup} \|\nabla W' (\varphi_1 (t) ) - \nabla W' (\varphi_2 (t))\|_{W^2_p (\Omega)} \\
& \leq  C (R) T^\frac{1}{4}  \underset{t \in [0,T]}{\sup} \|\varphi_1 (t) - \varphi_2 (t)\|_{W^3_p (\Omega)} \\
& \leq C (R)  T^{\frac{1}{4} + (1 - \frac{1}{p}) \theta } \|  \varphi_1 - \varphi_2 \|_{X^2_T} .
\end{align*}
So it remains to estimate the third term of  (\ref{strong_solution_nabla_w_prime}). As before we get
\begin{align*}
\|  & (m(\varphi_1) - m (\varphi_2) )\nabla W' (\varphi_2 ) \cdot \nabla \ve_2 \|_{Y^1_T} \\
& \leq C(R) T^{\frac{1}{4} + (1 - \frac{1}{p}) \theta} \|\varphi_1 - \varphi_2\|_{C^{0, (1 - \frac{1}{p} ) \theta} ([0,T]; W^3_p (\Omega))} \\
& \ \ \ \ \|\nabla W' (\varphi_2)\|_{BUC([0,T]; C^1 (\overline \Omega))}  \|\nabla \ve_2\|_{L^4 (0,T; L^3 (\Omega))} \\
& \leq  C(R) T^{\frac{1}{4} + (1 - \frac{1}{p}) \theta} \|\varphi_1 - \varphi_2\|_{X^2_T} ,
\end{align*}
which completes the estimate for (\ref{strong_solution_nabla_w_prime}).

 Finally, we study the last term of $ \| \mathbb P_\sigma ( F_1 (\ve_1, \varphi_1 ) - F_1 (\ve_2, \varphi_2 ) )\|_{L^2 (Q_T)}$. It holds
\begin{align*}
\| \Delta \varphi_2 \nabla \varphi_2 -  \Delta \varphi_1 \nabla \varphi_1\|_{Y_T} \leq \| \Delta \varphi_2 ( \nabla \varphi_2 - \nabla \varphi_1) \|_{Y_T} + \| ( \Delta \varphi_2 - \Delta \varphi_1) \nabla \varphi_1 \|_{Y_T}.
\end{align*}
Using $\Delta \varphi_i \in C([0,T]; C^0 (\overline \Omega))$ and $\nabla \varphi_i \in C^{0, (1- \frac{1}{p} ) \theta} ([0,T]; W^2_p (\Omega))$, $i = 1,2$, cf. (\ref{phi_hoelder_continuous_in_time_w3p}), the first term can be estimated by
\begin{align*}
\| \Delta \varphi_2 ( \nabla \varphi_2 - \nabla \varphi_1) \|_{Y^1_T} & \leq C (R) T^{\frac{1}{2} + (1 - \frac{1}{p}) \theta)} \|\nabla \varphi_1 - \nabla \varphi_2\|_{C^{0, ( 1 - \frac{1}{p}) \theta}([0,T]; W^2_p (\Omega))} \\
& \leq C (R) T^{\frac{1}{2} + (1 - \frac{1}{p}) \theta)} \| \varphi_1 - \varphi_2\|_{X^2_T} .
\end{align*}
Analogously the second term can be estimated by
\begin{align*}
 \| ( \Delta \varphi_2 - \Delta \varphi_1) \nabla \varphi_1 \|_{Y_T} \leq C (R) T^{\frac{1}{2} + (1 - \frac{1}{p}) \theta)} \| \varphi_1 - \varphi_2\|_{X^2_T} .
\end{align*}
Hence, we obtain
\begin{align*}
 \| \mathbb P_\sigma ( F_1 (\ve_1, \varphi_1 ) - F_1 (\ve_2, \varphi_2 ) )\|_{L^2 (Q_T)} & \leq C(R,T) \|(\ve_1 - \ve_2), (\varphi_1 - \varphi_2)\|_{X_T}
\end{align*}
for a constant $C(R,T) > 0$ such that $C(R,T) \rightarrow 0$ as $T \rightarrow 0$.

Remember that we study the nonlinear operator $\mathcal F \colon X_T \rightarrow Y_T$ given by
\begin{align*}
\mathcal F (\ve , \varphi) = 
\begin{pmatrix}
\mathbb P_\sigma F_1 (\ve, \varphi)  \\
- \nabla \varphi \cdot \ve + \di ( \frac{1}{\varepsilon} m(\varphi) \nabla W' (\varphi)) + \varepsilon  m (\varphi_0) \Delta^2 \varphi - \varepsilon \di ( m (\varphi) \nabla \Delta \varphi)
\end{pmatrix} 
\end{align*}
and we want to show its Lipschitz continuity such that (\ref{strong_solution_estimate_lipschitz_continuity_F}) holds. We already showed its Lipschitz continuity for the first part. Now we continue to study the second one. This part has to be estimated in $L^p (0,T; L^p (\Omega))$ for $4 < p < 6$.

For the analysis we use the boundedness of $\nabla \varphi $ in $ C([0,T]; C^1 (\overline \Omega))$ and of $\ve$ in $ L^\infty (0,T; L^6 (\Omega))$.
Then it holds 
\begin{align*}
 \| & (\nabla \varphi_1 \cdot \ve_1  - \nabla \varphi_2 \cdot \ve_2 ) \|_{L^p (Q_T)} \\
& \leq \|\nabla \varphi_1 \cdot ( \ve_1 - \ve_2 )\|_{L^p (Q_T)}   + \|  ( \nabla \varphi_1 - \nabla \varphi_2 ) \cdot \ve_2\|_{L^p (Q_T)}   \\
& \leq T^\frac{1}{p} \|\nabla \varphi_1\|_{L^\infty (Q_T)} \|\ve_1 - \ve_2\|_{L^\infty (0,T; L^6 (\Omega))} \\
& \ \ \ + T^\frac{1}{p} \|\nabla \varphi_1 - \nabla \varphi_2\|_{L^\infty (Q_T)}  \|\ve_2\|_{L^\infty (0,T; L^6 (\Omega))} \\
& \leq T^\frac{1}{p} R \|\ve_1 - \ve_2\|_{X^1_T} + T^\frac{1}{p} R \|\varphi_1 - \varphi_2\|_{X^2_T} .
\end{align*}
Next we study the term $\di (m (\varphi) \nabla W' (\varphi))$. We use the boundedness of $f(\varphi) $ in $C([0,T]; C^2 (\overline \Omega)) \cap C^{0, (1 - \frac{1}{p}) \theta} ([0,T]; W^3_p (\Omega))$ for $f \in \{ m , W' \}$ and $\varphi \in X^2_T $ with \linebreak $\|\varphi\|_{X^2_T} \leq R$. Then it holds
\begin{align*}
\|\di ( & m(\varphi_1) \nabla W' (\varphi_1)) - \di ( m (\varphi_2) \nabla W' (\varphi_2))\|_{Y^2_T} \\
& \leq C(R) \|m (\varphi_1) \nabla W' (\varphi_1)) -  m (\varphi_2) \nabla W' (\varphi_2)\|_{L^p (0,T; W^1_p (\Omega))} \\
& \leq C(R) T^\frac{1}{p} \underset{t \in [0,T]}{\sup} \|m (\varphi_1 (t)) - m (\varphi_2 (t))\|_{W^3_p (\Omega)} \|\nabla W' (\varphi_1)\|_{C([0,T]; C^1 (\overline \Omega))} \\
& \ \ \ + C(R) T^\frac{1}{p} \|m (\varphi_2)\|_{C([0,T]; C^2 (\overline \Omega))} \underset{t \in [0,T]}{\sup} \| W' (\varphi_1 (t)) - W' (\varphi_2 (t)) \|_{W^3_p (\Omega)} \\
& \leq C(R) T^\frac{1}{p} \left ( \underset{t \in [0,T]}{\sup} \|\varphi_1 (t) - \varphi_2 (t)\|_{W^3_p (\Omega)} + \underset{t \in [0,T]}{\sup} \| \varphi_1 (t) - \varphi_2 (t) \|_{W^3_p (\Omega)} \right ) \\
&\leq C(R) T^\frac{1}{p}  \underset{t \in [0,T]}{\sup} \|(\varphi_1 (t) - \varphi_2 (t)) - (\varphi_1 (0) - \varphi_2 (0))\|_{W^3_p (\Omega)} \\
& \leq C(R) T^{\frac{1}{p} + (1 - \frac{1}{p} ) \theta} \|\varphi_1 - \varphi_2\|_{C^{0, (1 - \frac{1}{p}) \theta} ([0,T]; W^3_p (\Omega))} \\
& \leq C(R) T^{\frac{1}{p} + (1 - \frac{1}{p} ) \theta} \|\varphi_1 - \varphi_2\|_{X^2_T} .
\end{align*}
Here we also used $\varphi_1 (0) = \varphi_2 (0) = \varphi_0$ for $\varphi_1, \varphi_2 \in X^2_T$ and (\ref{lipschitz_f_of_phi_w3p}).

Now there remain two terms which we need to study together for the proof of the Lipschitz continuity. Due to the boundedness of $m (\varphi) $ in $ BUC([0,T]; W^3_p (\Omega))$ and of $\nabla \Delta \varphi$ in $ L^p (0,T; W^1_p (\Omega))$, Theorem \ref{theorem_composition_sobolev_functions} yields the boundedness of $m (\varphi) \nabla \Delta \varphi$ in $ L^p (0,T; W^1_p (\Omega))$. Hence, this term is well-defined in the $L^p (Q_T)$-norm. We omit the prefactor $\varepsilon $ for both terms again and estimate
\begin{align}
&\|m (\varphi_0)  \Delta^2 \varphi_1 -  m (\varphi_0) \Delta^2 \varphi_2 + \di (m (\varphi_2) \nabla \Delta \varphi_2) - \di (m (\varphi_1) \nabla \Delta \varphi_1)\|_{L^p (Q_T)} \nonumber \\
& = \| (m(\varphi_0) - m (\varphi_1)) ( \Delta^2 \varphi_1 - \Delta^2 \varphi_2) + m (\varphi_1) \Delta^2 \varphi_1 - m (\varphi_1) \Delta^2 \varphi_2 + \nabla m (\varphi_2) \cdot \nabla \Delta \varphi_2 \nonumber \\
& \ \ \  + m ( \varphi_2) \Delta^2 \varphi_2 - \nabla m (\varphi_1) \cdot \nabla \Delta \varphi_1 - m (\varphi_1) \Delta^2 \varphi_1   \|_{L^p (Q_T)} \nonumber \\
& \leq \| ( m ( \varphi_1 (0) - m (\varphi_1)) (\Delta^2 \varphi_1 - \Delta^2 \varphi_2 )\|_{L^p (Q_T)} + \|  (m (\varphi_2) - m (\varphi_1)) \Delta^2 \varphi_2  \|_{L^p (Q_T)} \nonumber \\
& \ \ \ + \| \nabla  m (\varphi_2) \cdot \nabla \Delta \varphi_2 - \nabla m ( \varphi_1) \cdot \nabla \Delta \varphi_1 \|_{L^p (Q_T)}  \label{complicated_terms_lipschitz_continuity}
\end{align}
For the sake of clarity, we estimate these three terms separately again. Due to the boundedness of $m (\varphi_1)$ in $ C^{0, (1 - \frac{1}{p}) \theta} ([0,T] ; W^3_p (\Omega))$ we obtain for the first term
\begin{align*}
 \| ( m ( & \varphi_1 (0) - m (\varphi_1))  (\Delta^2 \varphi_1 - \Delta^2 \varphi_2 )\|_{L^p (Q_T)} \\
 &  \leq \underset{t \in (0,T)}{\sup} \|  m(\varphi_1 (0)) - m ( \varphi_1 (t)) \|_{C^0 (\overline \Omega)} \|\Delta^2 \varphi_1 - \Delta^2 \varphi_2\|_{L^p (Q_T)} \\
& \leq C(R) T^{(1 - \frac{1}{p}) \theta} \|m (\varphi_1)\|_{C^{0, (1-\frac{1}{p}) \theta } ([0,T]; W^3_p (\Omega))} \|\varphi_1 - \varphi_2\|_{X^2_T} .
\end{align*}
Since $m (\varphi_1)$ is bounded in $ C^{0, (1 - \frac{1}{p}) \theta} ([0,T] ; W^3_p (\Omega))$, we can estimate the second term in (\ref{complicated_terms_lipschitz_continuity}) by
\begin{align*}
\|  (m (\varphi_2) & - m (\varphi_1)) \Delta^2 \varphi_2  \|_{L^p (Q_T)}  \leq \underset{t \in (0,T)}{\sup}\|m (\varphi_2 (t)) - m (\varphi_1 (t))\|_{C^2 (\overline \Omega)} \|\Delta^2 \varphi_2\|_{L^p (Q_T)}  \\
& \leq C(R) \underset{t \in (0,T)}{\sup}\|m (\varphi_2 (t)) - m (\varphi_1 (t))\|_{W^3_p (\Omega)}   \\
& \leq C(R) \underset{t \in (0,T)}{\sup}\|(\varphi_2 (t) - \varphi_1 (t)) - (\varphi_2(0) - \varphi_1 (0))\|_{W^3_p (\Omega)}  \\
& \leq C(R) T^{(1 - \frac{1}{p}) \theta } \|\varphi_1 - \varphi_2\|_{C^{0, ( 1 - \frac{1}{p}) \theta} ([0,T]; W^3_p (\overline \Omega))}  ,
\end{align*}
where we used (\ref{lipschitz_f_of_phi_w3p}) again in the penultimate step. Finally, we study the last term in (\ref{complicated_terms_lipschitz_continuity}). Here we get
\begin{align}\label{strong_solution_lipschitz_continuity_most_complicated_term_}
\| & \nabla  m (\varphi_2) \cdot \nabla \Delta \varphi_2 - \nabla m ( \varphi_1) \cdot \nabla \Delta \varphi_1 \|_{L^p (Q_T)} \nonumber \\
& \leq \|(\nabla m (\varphi_2) - \nabla m (\varphi_1)) \cdot \nabla \Delta \varphi_2 \|_{L^p (Q_T)} \nonumber \\
& \ \ \ + \|\nabla m (\varphi_1) \cdot ( \nabla \Delta \varphi_2  - \nabla \Delta \varphi_1)\|_{L^p (Q_T)} .
\end{align}
Since $\nabla m (\varphi_1) $ is bounded in $C([0,T]; C^1 (\overline \Omega))$ and $\nabla \Delta \varphi_i$ is bounded in \linebreak $C([0,T]; L^p (\Omega))$ for $i = 1,2$, we can estimate the second summand by
\begin{align*}
\|\nabla m ( & \varphi_1)  \cdot ( \nabla \Delta \varphi_2  - \nabla \Delta \varphi_1)\|_{L^p (Q_T)} \\
&\leq C(R)  T^{\frac{1}{p}} \|\nabla m (\varphi_1)\|_{C ([0,T]; C^1 (\overline \Omega))} \|\nabla \Delta \varphi_1 - \nabla \Delta \varphi_2\|_{C([0,T]; L^p (\Omega))} \\
& \leq C(R) T^\frac{1}{p} \|\varphi_1 - \varphi_2\|_{X^2_T} .
\end{align*}
Thus it remains to estimate the first term of (\ref{strong_solution_lipschitz_continuity_most_complicated_term_}). Here we get
\begin{align*}
 \|(\nabla m & (\varphi_2) - \nabla m (\varphi_1)) \cdot \nabla \Delta \varphi_2 \|_{L^p (Q_T)} \\
& \leq C(R) T^\frac{1}{p} \underset{t \in [0,T]}{\sup} \|\nabla m (\varphi_2(t)) - \nabla m (\varphi_1 (t))\|_{C^0 (\overline \Omega)} \|\nabla \Delta \varphi_2\|_{C([0,T]; L^p (\Omega))} \\
& \leq C(R) T^\frac{1}{p} \underset{t \in [0,T]}{\sup} \|m (\varphi_2 (t)) - m (\varphi_1 (t))\|_{W^3_p (\Omega)} \|\varphi_2\|_{C([0,T] ; W^3_p (\Omega))} \\
& \leq C(R) T^\frac{1}{p}  \underset{t \in [0,T]}{\sup} \|\varphi_1 (t) - \varphi_2 (t)\|_{W^3_p (\Omega)} \\
& \leq C(R) T^{\frac{1}{p} + (1 - \frac{1}{p}) \theta} \|\varphi_1 - \varphi_2\|_{C^{0, (1 - \frac{1}{p}) \theta} ([0,T] ; W^3_p (\Omega))} .
\end{align*}
Hence, (\ref{strong_solution_lipschitz_continuity_most_complicated_term_}) is Lipschitz continuous and therefore also the second part of $\mathcal F$ is Lipschitz continuous. Together with the Lipschitz continuity of the first part of $\mathcal F$ we have shown 
\begin{align*}
\| \mathcal F (\ve_1 , \varphi_1 ) - \mathcal F (\ve_2 , \varphi_2) \| _{Y_T} \leq C (T, R) \|(\ve_1 - \ve_2, \varphi_1 - \varphi_2 )\|_{X_T}
\end{align*}
for all $(\ve_i, \varphi_i) \in X_T$ with $\|(\ve_i, \varphi_i)\|_{X_T} \leq R$, $i = 1,2$, and a constant $C(T,R) > 0 $ such that $C (T,R) \rightarrow 0$ as $T \rightarrow 0$.

\section{Existence and Continuity of $\mathcal L^{-1}$ }\label{sec:Linear}

In the following we need:
\begin{theorem}\label{strong_solution_showalter_theorem_6_1}
Let the linear, symmetric and monotone operator $\mathcal B$ be given from the real vector space $E$ to its algebraic dual $E'$, and let $E' _b$ be the Hilbert space which is the dual of $E$ with the seminorm 
\begin{align*}
|x|_b = \mathcal B x (x) ^\frac{1}{2}, \qquad x \in E .
\end{align*}
Let $A \subseteq E \times E' _b$ be a relation with domain $D = \{  x \in E: \ A(x) \neq \emptyset  \}$. Let $A$ be the subdifferential, $\partial \varphi$, of a convex lower-semi-continuous function $\varphi: E_b \rightarrow [0, \infty ]$ with $\varphi (0) = 0$. Then for each $u_0$ in the $E_b$-closure of $\mathrm{dom} (\varphi)$ and each $f \in L^2 (0,T; E' _b)$ there is a solution  $u : [0,T] \rightarrow E$ with $\mathcal B u \in C([0,T], E' _b)$ of
\begin{align*}
\frac{d}{dt} ( \mathcal B u (t) ) + A ( u (t)) \ni f(t) , \qquad 0 < t < T, 
\end{align*}
with
\begin{align*}
\varphi \circ u \in L^1 (0,T), \sqrt{t} \frac{d}{dt} \mathcal B u (\cdot ) \in L^2 (0,T; E' _b), u(t) \in D, \text{ a.e. } t \in [0,T] ,
\end{align*}
and $\mathcal B u(0) = \mathcal B u_0$. If in addition $u_0 \in \mathrm{dom} (\varphi)$, then 
\begin{align*}
\varphi \circ u \in L^\infty (0,T), \qquad \frac{d}{dt} \mathcal B u \in L^2 (0,T; E' _b) .
\end{align*}
\end{theorem}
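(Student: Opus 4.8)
The plan is to reduce the degenerate (implicit) evolution inclusion to a standard subdifferential gradient flow in a Hilbert space, and then to invoke the Brezis theory of maximal monotone operators. First I would make precise the Hilbert-space structure that is already latent in the hypotheses. The symmetric, monotone operator $\mathcal{B}$ induces the symmetric, positive semidefinite bilinear form $b(x,y) = \mathcal{B}x(y)$ on $E$, whose seminorm is exactly $|x|_b = b(x,x)^{1/2}$. Factoring out $\ker|\cdot|_b$ and completing yields the Hilbert space $E_b$, and $\mathcal{B}$ extends to the Riesz isomorphism $\mathcal{B}\colon E_b \to E_b'$. Thus $E_b'$ carries the Hilbert inner product determined by $(\mathcal{B}x,\mathcal{B}y)_{E_b'} = b(x,y)$, and it is this space $E_b'$, rather than $E$ itself, that is the natural state space for the flow.

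Next I would transport the problem to $E_b'$ through the unknown $w := \mathcal{B}u$ and the transported energy
\[
\psi(w) := \varphi(\mathcal{B}^{-1}w), \qquad w \in E_b',
\]
which is again convex, lower semicontinuous and proper and satisfies $\psi(0)=0$. Using the self-duality of the Hilbert space $E_b'$ together with the identity $(\mathcal{B}x,\mathcal{B}z)_{E_b'} = \langle \mathcal{B}x, z\rangle$, a direct check from the definition of the subdifferential shows that the subdifferential of $\psi$ \emph{taken in} $E_b'$ satisfies $\partial\psi(w) = \partial\varphi(\mathcal{B}^{-1}w) = A(u)$ as subsets of $E_b'$. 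Hence the original inclusion is equivalent to the gradient flow
\[
\frac{dw}{dt} + \partial\psi(w) \ni f \quad \text{in } E_b', \qquad w(0) = \mathcal{B}u_0.
\]
Since $\partial\psi$ is maximal monotone on the Hilbert space $E_b'$, the Brezis existence theorem applies verbatim: for $w(0)$ in the $E_b'$-closure of $\mathrm{dom}(\psi)$ and $f\in L^2(0,T;E_b')$ it furnishes a unique $w\in C([0,T];E_b')$ with $\sqrt{t}\,\frac{dw}{dt}\in L^2(0,T;E_b')$ and $\psi\circ w\in L^1(0,T)$, and with $\frac{dw}{dt}\in L^2(0,T;E_b')$, $\psi\circ w\in L^\infty(0,T)$ whenever $w(0)\in\mathrm{dom}(\psi)$. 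Translating back through $w=\mathcal{B}u$ (noting $\mathcal{B}u_0\in\mathrm{dom}(\psi)$ iff $u_0\in\mathrm{dom}(\varphi)$) gives precisely the asserted regularity and the initial condition $\mathcal{B}u(0)=\mathcal{B}u_0$.

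If one prefers a self-contained argument in place of the citation, the engine behind the Brezis theorem is Yosida regularization: replace $\partial\psi$ by its Lipschitz Yosida approximation $A_\lambda$, solve the resulting ordinary differential equation $w_\lambda' + A_\lambda(w_\lambda)=f$ in $E_b'$, and derive $\lambda$-uniform a priori bounds from the two energy identities obtained by testing with $w_\lambda'$ and with $w_\lambda$, using the chain-rule estimate $\frac{d}{dt}\psi_\lambda(w_\lambda)=(\partial\psi_\lambda(w_\lambda),w_\lambda')_{E_b'}$. Weak compactness then yields a limit solving the inclusion, the $\sqrt{t}$-weight being exactly what accommodates the weaker hypothesis $w(0)\in\overline{\mathrm{dom}(\psi)}$ rather than $w(0)\in\mathrm{dom}(\psi)$.

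The main obstacle is the bookkeeping forced by the degeneracy of $\mathcal{B}$, i.e.\ the possible non-triviality of $\ker|\cdot|_b$. Because of it the Hilbert-space solution lives naturally in $E_b'$, whereas the statement demands a genuine $E$-valued map $u$ with $u(t)\in D$ a.e.\ and prescribes the datum only through $\mathcal{B}u(0)=\mathcal{B}u_0$. The delicate step is therefore the lifting: recovering from $w$ a measurable $u\colon[0,T]\to E$ with $\mathcal{B}u=w$ and $u(t)\in D$ a.e. This is exactly where one uses that $A=\partial\varphi$ is defined on $E_b$ and that the identity $\partial\psi(w)=\partial\varphi(u)$ forces the selected $u(t)$ into $\mathrm{dom}(\partial\varphi)\subseteq D$; uniqueness, by contrast, is only available for $w=\mathcal{B}u$ and not for $u$ itself, consistent with the initial condition being imposed only on $\mathcal{B}u(0)$.
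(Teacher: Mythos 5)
First, a point of comparison: the paper does not prove this theorem at all --- it is quoted verbatim from Showalter and justified by the citation \cite[Chapter IV, Theorem 6.1]{Showalter}. Your strategy (transport the degenerate inclusion to the Hilbert space $E_b'$ via $w=\mathcal{B}u$ and invoke the Brezis theory for subdifferential gradient flows, with Yosida regularization as the underlying engine) is indeed the strategy of the cited proof, so at the level of architecture you are on the right track.

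However, the sketch has a genuine gap exactly at the point where the degeneracy bites, and it is not merely ``bookkeeping''. The definition $\psi(w):=\varphi(\mathcal{B}^{-1}w)$ presupposes an inverse that does not exist: $\mathcal{B}$ is only monotone, so $|\cdot|_b$ is a seminorm, $\mathcal{B}$ need not be injective on $E$, and its range need not be all of $E_b'$ (nor even closed in $E_b'$). One must instead set $\psi(w)=\varphi(x)$ if $w=\mathcal{B}x$ for some $x\in\operatorname{dom}(\varphi)$ and $\psi(w)=+\infty$ otherwise, check well-definedness on the fibres of $\mathcal{B}$ (this uses lower semicontinuity of $\varphi$ with respect to the seminorm), and --- crucially --- prove that $\psi$ is lower semicontinuous \emph{on the Hilbert space} $E_b'$. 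This last point is the heart of the matter and is not a ``direct check'': a function that is finite on a dense, non-closed subspace and $+\infty$ elsewhere is in general not lsc, so the Brezis theorem cannot be applied ``verbatim'' until this is established. Likewise, the identification $\partial\psi(\mathcal{B}u)=A(u)$ needs both inclusions (the nontrivial one being that $\partial\psi(w)\neq\emptyset$ forces $w\in\mathcal{B}(D)$), and the recovery of a \emph{measurable} selection $t\mapsto u(t)\in D$ with $\mathcal{B}u(t)=w(t)$ must be argued, not just asserted. You correctly flag the lifting as the delicate step, but flagging it is not the same as closing it; as written, the argument establishes existence of $w$, not of $u$.
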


The proof of this theorem can be found in \cite[Chapter IV, Theorem 6.1]{Showalter}.

To prove Theorem~\ref{thm:linear} we need to show the existence of $(\tilde \ve, \tilde \varphi) \in X_T$ such that (\ref{strong_solution_tilde_v0_tilde_phi0_bounded}) holds and to prove that $\mathcal L : X_T \rightarrow Y_T$ is invertible with uniformly bounded inverse, i.e., there exists a constant $C > 0$ which does not depend on $T$ such that $\|\mathcal L^{-1}\|_{\mathcal L (Y_T, X_T)} \leq C$. Recall that the linear operator $\mathcal L \colon X_T \rightarrow Y_T$ is defined by
\begin{align*}
\mathcal L (\ve, \varphi) = 
\begin{pmatrix}
\mathbb P_\sigma ( \rho_0 \partial_t \ve ) - \mathbb P_\sigma ( \di (2 \eta (\varphi_0) D\ve ))  \\
\partial_t \varphi + \varepsilon m (\varphi_0) \Delta^2 \varphi
\end{pmatrix} .
\end{align*}
We note that the first part only depends on $\ve$ while the second part only depends on $\varphi$. Thus both equations can be solved separately.

To show the existence of a unique solution $\ve$ for every right-hand side $\bold f$ in the first equation we use Theorem \ref{strong_solution_showalter_theorem_6_1}.

So we have to specify what $E$, $E ' _b$, $\varphi$ and so on are in the problem we study and  show that the conditions of Theorem \ref{strong_solution_showalter_theorem_6_1} are fulfilled. Then Theorem~\ref{strong_solution_showalter_theorem_6_1} yields the existence of a solution. More precisely, we obtain the following lemma.

\begin{lemma}\label{strong_solution_existence_proof_L_1_part1}
Let Assumption \ref{strong_solutions_general_assumptions} hold. Then for every $\bold v_0 \in H^1_0 (\Omega)^d \cap L^2_\sigma (\Omega)$, \linebreak $f \in L^2 (0,T; L^2_\sigma (\Omega))$, $\varphi_0 \in W^{1}_r (\Omega)$, $r > d \geq 2$, and every $0 < T < \infty$ there \linebreak exists a unique solution
\begin{align*} 
\bold v\in W^1_2 (0,T; L^2_\sigma (\Omega)) \cap L^\infty (0,T; H^1_0 (\Omega)^d)
\end{align*} 
such that 
\begin{align}
\mathbb P_\sigma (\rho_0 \partial_t \bold v) - \mathbb P_\sigma (\mathrm{div} (2 \eta (\varphi_0) D \bold v)) &= f   && \text{ in } Q_T,  \label{strong_solution_L_first_equation} \\
\mathrm{div} (\bold v) &= 0 && \text{ in } Q_T , \\
\bold v_{|\partial \Omega} &= 0 && \text{ on } (0, T) \times  \partial \Omega  , \\
\bold v (0) &= \bold v_0  && \text{ in } \Omega  \label{strong_solution_L_first_equation_initial_data}
\end{align}
for a.e. $(t,x)$ in $(0,T) \times \Omega$, where $\bold v (t) \in H^2 (\Omega)^d$ for a.e. $t \in (0,T)$.
\end{lemma}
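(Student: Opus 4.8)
The plan is to apply Theorem \ref{strong_solution_showalter_theorem_6_1} to the first component of $\mathcal{L}$, which is a non-autonomous-looking but actually time-independent Stokes-type operator with variable coefficients $\rho_0=\rho(\varphi_0)$ and $\eta(\varphi_0)$ frozen at the initial datum. First I would set up the abstract framework: take $E := H^1_0(\Omega)^d \cap L^2_\sigma(\Omega)$ and define the bilinear form
\begin{align*}
\mathcal{B}\, \ue(\ve) := \into \rho_0\, \ue\cdot \ve\,\dx, \qquad a(\ue,\ve):= \into 2\eta(\varphi_0)\, D\ue : D\ve\,\dx,
\end{align*}
so that $\mathcal{B}$ is the weighted $L^2$-inner product and $a$ is the symmetric Dirichlet form. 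Since $\rho_0 = \frac{\tilde\rho_1+\tilde\rho_2}{2}+\frac{\tilde\rho_2-\tilde\rho_1}{2}\varphi_0$ with $|\varphi_0|\leq 1$ is bounded above and below by positive constants (using $\tilde\rho_1,\tilde\rho_2>0$), $\mathcal{B}$ defines an equivalent norm on $L^2_\sigma(\Omega)$, so $E'_b = L^2_\sigma(\Omega)$ up to equivalence of norms. I would let $\varphi(\ve):=\frac12 a(\ve,\ve)$ be the associated convex, lower-semicontinuous functional with $\varphi(0)=0$, whose subdifferential $A=\partial\varphi$ is the realization of $\ue\mapsto -\mathbb{P}_\sigma\di(2\eta(\varphi_0)D\ue)$. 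The coercivity of $a$ needed for $\varphi$ to be well-behaved follows from Korn's inequality together with $\eta(\varphi_0)\geq \eta_0>0$.

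Next I would verify the hypotheses of Theorem \ref{strong_solution_showalter_theorem_6_1}: $\mathcal{B}$ is linear, symmetric, and monotone; $\varphi$ is convex, lower-semicontinuous with $\varphi(0)=0$; and $f\in L^2(0,T;L^2_\sigma(\Omega))=L^2(0,T;E'_b)$. The initial datum $\ve_0\in H^1_0(\Omega)^d\cap L^2_\sigma(\Omega)=E$ lies in $\mathrm{dom}(\varphi)$ since $a(\ve_0,\ve_0)<\infty$. Therefore the theorem's strengthened conclusion applies: there is a solution $\ve$ with $\varphi\circ \ve\in L^\infty(0,T)$ and $\frac{d}{dt}\mathcal{B}\ve\in L^2(0,T;E'_b)$. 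Translating back, $\varphi\circ\ve\in L^\infty(0,T)$ gives $\ve\in L^\infty(0,T;H^1_0(\Omega)^d)$ via coercivity, and $\frac{d}{dt}\mathcal{B}\ve\in L^2(0,T;L^2_\sigma(\Omega))$ together with the boundedness of $\rho_0$ gives $\partial_t\ve\in L^2(0,T;L^2_\sigma(\Omega))$, i.e. $\ve\in W^1_2(0,T;L^2_\sigma(\Omega))$. The equation $\mathbb{P}_\sigma(\rho_0\partial_t\ve)-\mathbb{P}_\sigma\di(2\eta(\varphi_0)D\ve)=f$ then holds in $L^2_\sigma(\Omega)$ for a.e.\ $t$, and $\mathcal{B}\ve(0)=\mathcal{B}\ve_0$ forces $\ve(0)=\ve_0$.

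The remaining and, I expect, principal point is the elliptic regularity statement $\ve(t)\in H^2(\Omega)^d$ for a.e.\ $t$. This is where the hypotheses $\varphi_0\in W^1_r(\Omega)$ with $r>d\geq 2$ and the $C^4$-boundary enter. For a.e.\ fixed $t$ the function $\ve(t)$ solves a stationary generalized Stokes system $-\di(2\eta(\varphi_0)D\ve(t))+\nabla q = g(t)$, $\di\ve(t)=0$, with $g(t):=f(t)-\mathbb{P}_\sigma(\rho_0\partial_t\ve(t))\in L^2_\sigma(\Omega)$ for a.e.\ $t$. I would invoke $H^2$-regularity theory for the Stokes operator with variable viscosity: the coefficient $\eta(\varphi_0)$ lies in $W^1_r(\Omega)\hookrightarrow C^0(\overline\Omega)$ for $r>d$, which is enough regularity of the leading coefficient to run the standard difference-quotient/localization argument and obtain $\ve(t)\in H^2(\Omega)^d$ with an estimate controlled by $\|g(t)\|_{L^2}$. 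The hard part is carefully checking that the variable, merely $W^1_r$-coefficient $\eta(\varphi_0)$ does not obstruct the second-order a priori estimate — one must handle the commutator terms arising from differentiating the product $\eta(\varphi_0)D\ve$, using the embedding $W^1_r(\Omega)\hookrightarrow L^\infty(\Omega)$ and interpolation to absorb lower-order contributions. Once this pointwise-in-$t$ regularity is in hand, measurability of $t\mapsto\ve(t)\in H^2$ follows from the uniqueness and linearity of the elliptic problem, completing the proof. Uniqueness of $\ve$ itself follows from testing the homogeneous equation with $\ve$ and using the coercivity of $a$ together with a Gronwall argument.
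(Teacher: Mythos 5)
Your proposal follows essentially the same route as the paper: both apply Showalter's Theorem~\ref{strong_solution_showalter_theorem_6_1} with $\mathcal{B}\ue=\mathbb{P}_\sigma(\rho_0\ue)$ and the convex Dirichlet functional $\ue\mapsto\into\eta(\varphi_0)|D\ue|^2\,\dx$, identify its subdifferential with $-\mathbb{P}_\sigma\di(2\eta(\varphi_0)D\ue)$, and obtain $\ve(t)\in H^2(\Omega)^d$ from elliptic regularity for the variable-viscosity Stokes system applied to $g(t)=f(t)-\mathbb{P}_\sigma(\rho_0\partial_t\ve(t))$. The only real difference is that you sketch a difference-quotient proof of that $H^2$-regularity lemma (and of the subdifferential identification it underpins), whereas the paper imports it ready-made from \cite[Lemma 4]{ModelH} and carries out the subdifferential identification in detail in Lemma~\ref{strong_solution_A_coincides_partial_phi}.
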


\begin{proof} 
Since we want to solve (\ref{strong_solution_L_first_equation})-(\ref{strong_solution_L_first_equation_initial_data}) with Theorem \ref{strong_solution_showalter_theorem_6_1}, we define
\begin{align*}
\mathcal B u := \mathbb P _\sigma ( \rho_0 u)  
\end{align*}
for $u \in E$, where we still need to specify the real vector space $E$.
But as we want to have $\frac{d}{dt} \mathcal B u \in L^2 (0,T; L^2_\sigma (\Omega))$, the dual space $E'_b$ has to coincide with $L^2_\sigma (\Omega)$. But this can be realized by choosing $E = L^2_\sigma (\Omega)$. Then $E_b' \cong L^2_\sigma (\Omega)$ and with the notation in Theorem \ref{strong_solution_showalter_theorem_6_1} we get the Hilbert space $E'_b$ equipped with the seminorm
\begin{align*}
|\ue|_b = \mathcal B \bold \ue ( \ue) ^\frac{1}{2} &= \left ( \into \mathbb P _\sigma ( \rho_0 \ue) \cdot \ue  \dx \right ) ^\frac{1}{2} = \left ( \into \rho_0 \ue \cdot \mathbb P_\sigma \ue \dx \right ) ^\frac{1}{2} \\
&= \left ( \into \rho_0 |\ue|^2 \dx \right ) ^\frac{1}{2} \cong \|\ue\|_{L^2 (\Omega)} .
\end{align*}
Thus we obtain $E '_b \cong L^2_\sigma (\Omega) = E_b$.
Moreover, we define $A: \mathcal D (A) \rightarrow  L^2_\sigma (\Omega) ' \cong L^2_\sigma (\Omega)$ by 
\begin{align}\label{strong_solution_definition_A}
(A \ue )(\ve) :=
\begin{cases} 
-  \into  \mathbb P _\sigma \di (2 \eta (\varphi_0) D\ue ) \cdot \ve \dx & \text{ if } \ue \in \mathrm{dom} (A) \\
\emptyset & \text{ if } \ue \notin \mathrm{dom} (A) .
\end{cases}
\end{align}
for every $ \ve \in L^2_\sigma (\Omega)$ and  $\mathcal D (A) = H^2 (\Omega)^d \cap H^1_0 (\Omega)^d \cap L^2_\sigma (\Omega)$. Thus we get for the relation $\mathcal A$ defined by $ \mathcal A := \{  (\ue,\ve) : \ \ve = A\ue , \ \ue \in \mathcal D (A) \} $ the following inclusions
\begin{align*}
\mathcal A  = \{ (\ue , - \mathbb P_\sigma \di (2 \eta (\varphi_0) D\ue) :  \ue \in H^2 (\Omega)^d \cap H^1_0 (\Omega)^d \cap L^2_\sigma (\Omega) \}  \subseteq  E \times E'_b ,
\end{align*}
since the term  $ \mathbb P_\sigma \di (2 \eta (\varphi_0) D\ue)$ is in $L^2 _\sigma (\Omega) ' \cong L^2_\sigma (\Omega)$. Now we define \linebreak $\psi : L^2_\sigma (\Omega) \rightarrow [0, + \infty]$ by
\begin{align}\label{strong_solution_definition_phi}
\psi (\ue) := 
\begin{cases}
\into  \eta (\varphi_0) D\ue : D\ue \ dx & \text{ if } \ue \in H^1_0 (\Omega)^d \cap L^2_\sigma (\Omega) = \mathrm{dom} (\psi)  , \\
+ \infty &  \text{ else. }
\end{cases}
\end{align}
We note $\psi (0) = 0$ and $\ve_0$ is in the $L^2$-closure of $\mathrm{dom}(\psi)$, i.e., in $L^2_\sigma (\Omega)$. 
Hence, it remains to show that $\psi$ is convex and lower-semi-continuous and that $A$ is the subdifferential of $\psi$. Then we can apply Theorem \ref{strong_solution_showalter_theorem_6_1}. But the first two properties are obvious. Thus it remains to show the subdifferential property, which is satisfied by Lemma \ref{strong_solution_A_coincides_partial_phi} below. Hence, we are able to apply Theorem \ref{strong_solution_showalter_theorem_6_1} which yields the existence.
Moreover, the initial condition is also fulfilled as Theorem \ref{strong_solution_showalter_theorem_6_1} yields 
\begin{align*}
\mathbb P_\sigma (\rho_0 \ve(0)) = \mathcal B \ve (0) = \mathcal B \ve_0 = \mathbb P_\sigma (\rho_0 \ve_0)  \qquad \text{ in } L^2 (\Omega) .
\end{align*}
In particular we can conclude
\begin{align*}
0 &= \int \limits_\Omega \mathbb P_\sigma ( \rho_0 \ve(0) - \rho_0 \ve_0 ) \cdot \boldsymbol \psi \mathit {dx} = \int  \limits_\Omega (\rho_0 \ve(0) - \rho_0 \ve_0) \cdot \boldsymbol \psi \mathit{dx}
\end{align*}
for every $\boldsymbol \psi \in C^\infty_{0, \sigma} (\Omega)$. By approximation this identity also holds for $\boldsymbol \psi := \ve(0) - \ve_0 \in L^2_\sigma (\Omega)$ and we get 
\begin{align*}
\int \limits_\Omega \rho_0 |\ve(0) - \ve_0|^2 \mathit{dx} = 0 .
\end{align*}
This implies $\ve(0) = \ve_0$ in $L^2_\sigma (\Omega)$.

For the uniqueness we consider $\ve_1 , \ve_2 \in W^1_2 (0,T; L^2_\sigma (\Omega)) \cap L^\infty (0,T; H^1_0 (\Omega) \cap L^2_\sigma (\Omega))$ such that (\ref{strong_solution_L_first_equation}) holds for a.e. $(t,x) \in (0,T) \times \Omega$. Then $\ve := \ve_1 - \ve_2$ solves the homogeneous equation a.e. in $(0,T) \times \Omega$. Testing this homogeneous equation with $\ve$ we get
\begin{align*}
 \into  \frac{1}{2} \rho_0   \ve^2_{|t=T} \dx  + \int \limits_0^T \into 2 \eta (\varphi_0) D\ve : D\ve \dx \dt = 0.
\end{align*}
Hence, it follows $\ve \equiv 0$ and therefore $\ve_1 = \ve_2$, which yields the uniqueness.
\end{proof}

In the proof above we used that the mapping $A$ coincides with the subdifferential $\partial \varphi$. More precisely, we have the following lemma.

\begin{lemma}\label{strong_solution_A_coincides_partial_phi}
Let $\Omega \subseteq \mathbb R^d$, $d = 2,3$, be a domain and $\psi\colon L^2_\sigma (\Omega) \rightarrow [0, + \infty]$ be given as in (\ref{strong_solution_definition_phi}). Moreover, we consider $A : L^2_\sigma (\Omega) \rightarrow  L^2_\sigma (\Omega)$ to be given as in (\ref{strong_solution_definition_A}). Then it holds
\begin{enumerate}
\item $ \mathcal D (\partial \psi) = \mathcal D (A) . $
\item $\partial \psi (\ue) = \{ A \ue \} \text{ for all } \ue \in \mathcal D (A) .$
\end{enumerate}
\end{lemma}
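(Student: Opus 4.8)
The plan is to compute the subdifferential of the quadratic functional $\psi$ directly from its definition and to identify it with the variational form of the generalized Stokes operator $A$, the only genuinely nontrivial ingredient being elliptic regularity for the latter. The argument splits naturally according to the two inclusions $\mathcal D(A)\subseteq\mathcal D(\partial\psi)$ and $\mathcal D(\partial\psi)\subseteq\mathcal D(A)$.

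First I would record a variational characterization of $\partial\psi$. Since $\psi$ is finite exactly on $\mathrm{dom}(\psi)=H^1_0(\Omega)^d\cap L^2_\sigma(\Omega)$, any $\ue\in\mathcal D(\partial\psi)$ must lie there. For such $\ue$ and arbitrary $\ve\in\mathrm{dom}(\psi)$ I would insert $\ue+t\ve$, $t\in\R$, into the subgradient inequality; using the identity
\[
\psi(\ue+t\ve)-\psi(\ue)=2t\into\eta(\varphi_0)D\ue:D\ve\,\dx+t^2\into\eta(\varphi_0)|D\ve|^2\,\dx
\]
and letting $t\to0^\pm$ shows that $w\in\partial\psi(\ue)$ if and only if
\[
\langle w,\ve\rangle=\into 2\eta(\varphi_0)D\ue:D\ve\,\dx\qquad\text{for all }\ve\in H^1_0(\Omega)^d\cap L^2_\sigma(\Omega).
\]
Because $H^1_0(\Omega)^d\cap L^2_\sigma(\Omega)$ is dense in $L^2_\sigma(\Omega)$ and $w\in L^2_\sigma(\Omega)$, this identity determines $w$ uniquely, so $\partial\psi$ is at most single-valued.

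Next I would establish $\mathcal D(A)\subseteq\mathcal D(\partial\psi)$ together with the formula in (ii). For $\ue\in\mathcal D(A)=H^2(\Omega)^d\cap H^1_0(\Omega)^d\cap L^2_\sigma(\Omega)$, integration by parts (the boundary term vanishing because $\ve_{|\partial\Omega}=0$) gives $\into 2\eta(\varphi_0)D\ue:D\ve\,\dx=-\into\di(2\eta(\varphi_0)D\ue)\cdot\ve\,\dx$, and since $\ve\in L^2_\sigma(\Omega)$ the right-hand side equals $\into\bigl(-\mathbb P_\sigma\di(2\eta(\varphi_0)D\ue)\bigr)\cdot\ve\,\dx=\langle A\ue,\ve\rangle$. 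By the characterization above this means $A\ue\in\partial\psi(\ue)$, and single-valuedness upgrades this to $\partial\psi(\ue)=\{A\ue\}$.

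It remains to prove the reverse inclusion $\mathcal D(\partial\psi)\subseteq\mathcal D(A)$, which is where the real work lies. Given $\ue\in\mathcal D(\partial\psi)$ with associated $w$, the variational identity says that $\ue\in H^1_0(\Omega)^d\cap L^2_\sigma(\Omega)$ is a weak solution of the stationary generalized Stokes system $-\di(2\eta(\varphi_0)D\ue)+\nabla\pi=w$, $\di\ue=0$, $\ue_{|\partial\Omega}=0$; the pressure $\pi$ is produced by de Rham's theorem, since the functional $\ve\mapsto\into 2\eta(\varphi_0)D\ue:D\ve\,\dx-\langle w,\ve\rangle$ vanishes on all solenoidal test fields. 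The hard part is then to obtain $H^2$-regularity for this system with the variable, merely $W^1_r$-regular viscosity $\eta(\varphi_0)$; note that $\eta\in C^5_b(\R)$ and $\varphi_0\in W^1_r(\Omega)$, $r>d$, give $\eta(\varphi_0)\in W^1_r(\Omega)\hookrightarrow C(\overline\Omega)$ by Theorem \ref{theorem_composition_sobolev_functions}. Invoking elliptic regularity for the Stokes system on the $C^4$-domain $\Omega$ yields $\ue\in H^2(\Omega)^d$ and $\pi\in H^1(\Omega)$, hence $\ue\in\mathcal D(A)$. Finally, applying $\mathbb P_\sigma$ to the equation and using $\mathbb P_\sigma\nabla\pi=0$ together with $\mathbb P_\sigma w=w$ recovers $w=-\mathbb P_\sigma\di(2\eta(\varphi_0)D\ue)=A\ue$, consistent with (ii). The main obstacle throughout is thus the regularity theory for the generalized Stokes operator with nonconstant coefficients, which must either be quoted from the literature or established separately.
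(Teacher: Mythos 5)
Your proposal is correct and follows essentially the same route as the paper: the forward inclusion via integration by parts and convexity of $\psi$, and the reverse inclusion by extracting the variational identity from the subgradient inequality with test functions $\ue+t\tilde\we$ and then invoking $H^2$-regularity for the Stokes system with variable viscosity $\eta(\varphi_0)$, which the paper supplies as Lemma~\ref{strong_solution_abels_rational_mech_lemma_4} quoted from \cite{ModelH}. The only point to make explicit is that your ``if and only if'' characterization of $\partial\psi(\ue)$ requires the convexity (Young's inequality) estimate for the sufficiency direction, since letting $t\to 0^{\pm}$ only yields necessity; this is precisely the computation the paper writes out in its first part.
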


\begin{proof}
Remember that 
\begin{align*}
\mathcal D (\partial \psi) = \{  \ve \in L^2_\sigma (\Omega) : \ \partial \psi (\ve) \neq \emptyset \}
\end{align*}
and $\mathcal D (A) = H^2 (\Omega)^d \cap H^1_0 (\Omega)^d \cap L^2_\sigma (\Omega)$ by definition.

\textbf{$\bold{1^{st}}$ part: } $\mathcal D (A) \subseteq \mathcal D (\partial \psi)$ and $A\ue \in \partial \psi (\ue)$ for every $\ue \in \mathcal D (A)$. 
\\
To show the first part of the proof let $\ue \in \mathcal D (A)$ be given. If it holds $\ve \in L^2_\sigma (\Omega)$ but $\ve \notin H^1_0 (\Omega)^d$, then the inequality
\begin{align*}
\weight{A\ue, \ve-\ue}_{L^2 (\Omega)} \leq \psi (\ve) - \psi (\ue)
\end{align*} 
is satisfied since it holds $\psi (\ve) = + \infty$ in this case by definition. 
\\
So let $\ve \in H^1_0 (\Omega)^d \cap L^2_\sigma (\Omega)$. Then it holds
\begin{align*}
\weight{A\ue, \ve - \ue}_{L^2 (\Omega)} &= - \into \mathbb P_\sigma \di (2 \eta (\varphi_0) D\ue) \cdot (\ve- \ue) \dx \\
& =  \into 2 \eta (\varphi_0) D\ue : D\ve \dx - \into 2 \eta (\varphi_0) D\ue :  D \ue \dx \\
& \leq \into \eta (\varphi_0) |D\ue|^2 \dx + \into \eta (\varphi_0) |D\ve|^2 \dx - 2 \into \eta (\varphi_0) |D\ue|^2 \dx \\
& = \psi (\ve) - \psi (\ue) 
\end{align*}
for every $\ve \in H^1_0 (\Omega)^d \cap  L^2_\sigma (\Omega)$.
This implies that $A\ue$ is a subgradient of $\psi$ at $\ue$, i.e., $ A\ue \in \partial \psi (\ue)$, and $\partial \psi (\ue) \neq \emptyset$, i.e., $\ue \in \mathcal D (A) \subseteq \mathcal D (\partial \psi)$. Hence, we have shown the first part of the proof.

\textbf{$\bold{2^{nd}}$ part: } $\mathcal D (\partial \psi) \subseteq \mathcal D (A) $ and $\partial \psi (\ue) = \{A \ue\}$.
\\
Let $ \ue \in\mathcal D (\partial \psi) \subseteq \mathrm{dom} (\psi) \subseteq H^1_0 (\Omega)^d \cap L^2_\sigma (\Omega)$ be given. 
Then by definition of the subgradient there exists $\bold w \in \partial \psi (\ue) \subseteq L^2_\sigma (\Omega)$ such that
\begin{align}\label{strong_solution_subdifferential_inequality_phi}
\psi (\ue) - \psi (\ve) \leq \weight{\bold w , \ue - \ve}_{L^2 (\Omega)}
\end{align}
for every $\ve \in L^2_\sigma (\Omega)$.
Now we choose $\ve := \ue + t \tilde \we$ for some $\tilde \we \in H^1_0 (\Omega)^d \cap L^2_\sigma (\Omega)$ and $ t > 0$. Then inequality (\ref{strong_solution_subdifferential_inequality_phi}) yields
\begin{align*}
\psi (\ue) - \psi (\ve) &= \into \eta (\varphi_0) D\ue : D\ue \dx - \into \eta (\varphi_0) D(\ue + t \tilde \we) : D ( \ue + t \tilde \we ) \dx \\
&= - 2 t \into \eta (\varphi_0) D\ue : D \tilde \we \dx - t^2 \into \eta (\varphi_0) D\ue : D \tilde \we \dx \\
&  \leq -t \into \we \cdot \tilde \we \dx .
\end{align*}
Dividing this inequality by $ -t < 0$ and passing to the limit $t \searrow 0$ yields
\begin{align*}
\into \we \cdot \tilde \we \dx \leq \into 2 \eta (\varphi_0) D\ue : D \tilde \we \dx  .
\end{align*}
When we replace $\tilde \we $ by $- \tilde \we$ we can conclude
\begin{align*}
\into \we \cdot \tilde \we \dx \geq \into 2 \eta (\varphi_0) D\ue : D \tilde \we \dx  .
\end{align*}
Thus it follows
\begin{align}\label{strong_solution_left_hand_side_of_lemma_subdifferential}
 \into \we \cdot \tilde \we \dx = \into 2 \eta (\varphi_0) D\ue : D \tilde \we \dx 
\end{align}
for every $\tilde \we \in H^1_0 (\Omega)^d \cap L^2_\sigma (\Omega)$. Since we assumed $\we \in L^2_\sigma (\Omega)$, we can apply \linebreak Lemma \ref{strong_solution_abels_rational_mech_lemma_4} below which yields $\ue \in H^2 (\Omega)^d \cap H^1_0 (\Omega)^d \cap L^2_\sigma (\Omega)$. Using this regularity in (\ref{strong_solution_left_hand_side_of_lemma_subdifferential}) we can conclude
\begin{align*}
 \into \we \cdot \tilde \we \dx = \into 2 \eta (\varphi_0) D\ue : D \tilde \we \dx = - \into \mathbb P_\sigma \di (2 \eta (\varphi_0) D\ue ) \cdot \tilde \we \dx
\end{align*}
for every $\tilde \we \in H^1_0 (\Omega)^d \cap L^2_\sigma (\Omega)$. Therefore, we obtain $\we = - \mathbb P _\sigma \di (2 \eta (\varphi_0) D\ue) = A\ue$ in $L^2 (\Omega)$, i.e., $\ue \in \mathcal D (A)$ and $\partial \psi (\ue) = \{ A\ue\}$. 
\end{proof}

For the regularity of the Stokes system with variable viscosity we used the following lemma.

\begin{lemma}\label{strong_solution_abels_rational_mech_lemma_4}
Let $\eta \in C^2 (\mathbb R)$ be such that $\eta (s) \geq s_0 > 0$ for all $s \in \mathbb R$ and some $s_0 > 0$, $\varphi_0 \in W^{1}_r (\Omega)$, $r > d \geq 2$, with $\|\varphi_0\|_{W^{1}_r (\Omega)} \leq R$, and let $\bold u \in H^1_0 (\Omega)^d \cap L^2_\sigma (\Omega)$ be a solution of
\begin{align*}
\weight{2 \eta (\varphi_0) D \bold u, D \boldsymbol{\tilde \we}}_{L^2 (\Omega)} = \weight{\bold w , \boldsymbol{\tilde \we}}_{L^2 (\Omega)} \qquad \text{ for all }\boldsymbol{\tilde \we} \in C^\infty_{0, \sigma} (\Omega) ,
\end{align*}
where $ \bold w \in L^2 (\Omega)^d$. Then it holds $\bold u \in H^2 (\Omega)^d$ and
\begin{align*}
\|\bold u\|_{H^2 (\Omega)} \leq C(R) \|\bold w\|_{L^2 (\Omega)} ,
\end{align*}
where $C(R)$ only depends on $\Omega$, $\eta$, $r > d$, and $R > 0$.
\end{lemma}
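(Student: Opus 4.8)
The plan is to read the identity as a stationary Stokes system whose viscosity $\eta(\varphi_0)$ is variable but, since $W^1_r(\Omega)\hookrightarrow C^{0,1-\frac{d}{r}}(\overline\Omega)$ for $r>d$, uniformly continuous with modulus controlled by $R$, and to obtain the second derivatives by freezing the (continuous) top-order coefficient and then removing the genuinely lower-order term by a bootstrap. First I would test the weak equation with $\ue$ and use $\eta(\varphi_0)\ge s_0>0$ together with Korn's and Poincaré's inequalities to get the energy estimate $\norm{\ue}_{H^1(\Omega)}\le C\norm{\we}_{L^2(\Omega)}$, with $C$ depending only on $\Omega$ and $s_0$. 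Since the functional $\ve\mapsto\weight{2\eta(\varphi_0)D\ue,D\ve}_{L^2(\Omega)}-\weight{\we,\ve}_{L^2(\Omega)}$ is bounded on $H^1_0(\Omega)^d$ and annihilates $C^\infty_{0,\sigma}(\Omega)$, de Rham's theorem (equivalently the surjectivity of $\di\colon H^1_0(\Omega)^d\to L^2(\Omega)$ onto the mean-value-free functions on the bounded Lipschitz domain $\Omega$) yields a pressure $q\in L^2(\Omega)$ with $\into q\,\dx=0$ such that $-\di(2\eta(\varphi_0)D\ue)+\nabla q=\we$ holds distributionally, with $\norm{q}_{L^2(\Omega)}\le C(\norm{\we}_{L^2(\Omega)}+\norm{\ue}_{H^1(\Omega)})$.

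Because $\di\ue=0$, expanding the divergence turns the system into the non-divergence form
\begin{align*}
-\eta(\varphi_0)\Delta\ue+\nabla q=\we+2\big(\nabla\eta(\varphi_0)\big)\cdot D\ue=:\we+\mathbf g,\qquad \di\ue=0,
\end{align*}
where $\nabla\eta(\varphi_0)=\eta'(\varphi_0)\nabla\varphi_0\in L^r(\Omega)$, so that $\mathbf g$ is genuinely of lower order. For this non-divergence operator mere uniform continuity of the top-order coefficient $\eta(\varphi_0)$ already suffices for a $W^2_s$-estimate, $1<s<\infty$, which I would prove by localization. Covering $\overline\Omega$ by finitely many balls $B_k$ of small radius, multiplying by a subordinate partition of unity and flattening $\partial\Omega$ near boundary balls by the $C^4$-charts (only $C^2$ is needed), one freezes $\eta(\varphi_0)$ at the centre $x_k$. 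The frozen problem is the constant-coefficient Stokes system on $\R^d$ or the half-space, for which the classical $W^2_s$-estimate holds; the resulting error carries the factor $\norm{\eta(\varphi_0)-\eta(\varphi_0(x_k))}_{L^\infty(B_k)}$, made arbitrarily small by shrinking the radius, the admissible radius depending only on the modulus of continuity of $\eta(\varphi_0)$, i.e. on $R$, $\eta$ and $r$. This smallness lets the top-order error be absorbed on the left, while the commutators produced by the cut-offs and charts are of lower order. Summation over $k$ then yields
\begin{align*}
\norm{\ue}_{W^2_s(\Omega)}\le C(R)\big(\norm{\we}_{L^s(\Omega)}+\norm{\mathbf g}_{L^s(\Omega)}+\norm{\ue}_{W^1_s(\Omega)}+\norm{q}_{L^s(\Omega)}\big).
\end{align*}

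Finally I would remove $\mathbf g$ by a bootstrap exploiting $r>d$. Initially $D\ue\in L^2(\Omega)$, so $\mathbf g\in L^{s_0}(\Omega)$ with $\tfrac1{s_0}=\tfrac1r+\tfrac12<1$; hence $\we+\mathbf g\in L^{s_0}(\Omega)$ and the a priori estimate (together with uniqueness of the weak solution, which identifies $\ue$ with the strong $W^2_{s_0}$-solution) gives $\ue\in W^2_{s_0}(\Omega)$, so $D\ue\in W^1_{s_0}(\Omega)\hookrightarrow L^{t_1}(\Omega)$ with $\tfrac1{t_1}=\tfrac1{s_0}-\tfrac1d=\tfrac12-\big(\tfrac1d-\tfrac1r\big)$. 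Since $\tfrac1d-\tfrac1r>0$, each step lowers the reciprocal integrability exponent of $D\ue$ by the fixed amount $\tfrac1d-\tfrac1r$ (once $s_n\ge d$ one even has $D\ue\in L^\infty$, which is better), so after finitely many iterations $\tfrac1r+\tfrac1{t_n}\le\tfrac12$, i.e. $\mathbf g\in L^2(\Omega)$. Then the $s=2$ case of the a priori estimate yields $\ue\in H^2(\Omega)^d$ and, after reabsorbing the lower-order norms $\norm{\ue}_{H^1(\Omega)}$ and $\norm{q}_{L^2(\Omega)}$ via the first step,
\begin{align*}
\norm{\ue}_{H^2(\Omega)}\le C(R)\norm{\we}_{L^2(\Omega)}.
\end{align*}
Every constant depends only on $\Omega$, $\eta$, $r$ and $R$, since the freezing radius and the number of bootstrap steps are governed solely by the modulus of continuity and the $W^1_r$-norm of $\eta(\varphi_0)$.

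The main obstacle is precisely the low regularity of the viscosity: $\eta(\varphi_0)$ is only $W^1_r$, so the top-order coefficient is merely continuous and its gradient produces the term $\mathbf g$ which is a priori not in $L^2(\Omega)$. The two devices that overcome this — freezing the continuous top-order coefficient on small balls, and the integrability bootstrap made possible by $r>d$ — are the crux of the argument, and it is the condition $r>d$ that makes both of them work.
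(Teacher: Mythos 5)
The paper itself gives no argument for this lemma---it simply cites \cite[Lemma 4]{ModelH}---and your proof is the standard one that underlies that reference: energy estimate plus de Rham for the pressure, rewriting in non-divergence form using $\operatorname{div}\ue=0$, a $W^2_s$-estimate for the Stokes system with merely continuous top-order coefficient obtained by localization and coefficient freezing, and a finite bootstrap in the integrability exponent driven by $\nabla\eta(\varphi_0)\in L^r$ with $r>d$. The argument is sound (the only cosmetic point being that at the borderline $s_n=d$ one gets $D\ue\in L^q$ for all finite $q$ rather than $L^\infty$, which still suffices since one only needs $L^{2r/(r-2)}$), so your proposal correctly reconstructs the proof the paper delegates to the literature.
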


The proof can be found in \cite[Lemma 4]{ModelH}.

Lemma \ref{strong_solution_existence_proof_L_1_part1} implies $\ve \in W_2^1 (0,T; L^2 _\sigma (\Omega)) \cap L^\infty (0,T; H^1_0 (\Omega)^d)$. But as we want to show that $\ve$ is in $X^1_T$, it remains to show $\ve \in L^2 (0,T; H^2 (\Omega)^d)$. To this end, we also use Lemma \ref{strong_solution_abels_rational_mech_lemma_4} above. 

\begin{lemma}\label{strong_solution_h2_regularity_v}
For the unique solution $\bold v \in W_2^1 (0,T; L^2_\sigma (\Omega)) \cap L^\infty (0,T; H^1_0 (\Omega)^d)$ of  (\ref{strong_solution_L_first_equation})-(\ref{strong_solution_L_first_equation_initial_data}) from Lemma \ref{strong_solution_existence_proof_L_1_part1} it holds
$
\bold v \in L^2 (0,T; H^2 (\Omega)^d) .
$
\end{lemma}
\begin{proof}
Let $\ve \in  W_2^1 (0,T; L^2_\sigma (\Omega)) \cap L^\infty (0,T; H^1_0 (\Omega)^d)$ be the unique solution of  (\ref{strong_solution_L_first_equation}) from Lemma \ref{strong_solution_existence_proof_L_1_part1}, i.e.,
\begin{align*}
A ( \ve(t)) &= \bold f (t) - \frac{d}{dt} ( \mathcal B \ve(t)) = \bold f(t) -   \mathbb P_\sigma (\rho_0 \partial_t \ve (t))  \quad  \text{for all } 0 < t < T .
\end{align*}
Since the right-hand side is not the empty set, we get by definition of $A$
\begin{align*}
\mathbb P_\sigma (\di (2 \eta (\varphi_0) D\ve (t))) = \mathbb P_\sigma (\rho_0 \partial_t \ve (t)) - \bold f (t)    \quad \text{for all }  0 < t < T
\end{align*}
for given $\bold f \in L^2 (0,T; L^2_\sigma (\Omega))$. From $\partial_t \ve \in L^2 (0,T; L^2_\sigma (\Omega))$ it follows
\begin{align*}
\weight{ 2 \eta (\varphi_0) D\ve(t) , D \we}_{L^2 (\Omega)} = \weight{\rho_0 \partial_t \ve (t) - \bold f (t) , \we} \qquad \text{ for every } \we \in C^\infty_{0, \sigma } (\Omega) 
\end{align*}
and a.e. $t \in (0,T)$. Hence, we can apply Lemma \ref{strong_solution_abels_rational_mech_lemma_4} and obtain
\begin{align*}
  \|\ve (t)\|_{H^2 (\Omega)} &\leq C(R) \|\rho_0 \partial_t \ve (t) -  \bold f (t) \|_{L^2 (\Omega)}  \\
  &\leq C(R) \left ( \|\rho_0 \partial_t \ve (t)\|_{L^2 (\Omega)} + \|\bold f (t) \|_{L^2 (\Omega)} \right ) 
\end{align*}
for a.e. $t \in (0,T)$. Since the right-hand side of this inequality is bounded in $L^2 (0,T)$, this shows the lemma.
\end{proof}

We still need to ensure that $\|\mathcal L^{-1}\|_{\mathcal L (Y_T, X_T)} $ remains bounded. This is shown in the next lemma.

\begin{lemma}\label{strong_solution_lemma_l_inverse_t_t_0_firstpart}
Let the assumptions of Lemma \ref{strong_solution_existence_proof_L_1_part1} hold and $0 < T_0 < \infty$ be given. Then 
\begin{align*}
\|\mathcal L^{-1}_{1, T}\|_{\mathcal L (Y^1_T, X^1_T)} \leq \|\mathcal L^{-1}_{1, T_0}\|_{\mathcal L (Y^1_{T_0}, X^1_{T_0})} < \infty \qquad \text{ for all } 0 < T < T_0 .
\end{align*}
\end{lemma}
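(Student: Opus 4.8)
The plan is to exploit the causal (local-in-time) structure of the parabolic velocity equation together with the fact that the norm on $X^1_T$ is monotone under restriction of the time interval. The key point is that for $T<T_0$ every solution on $(0,T)$ can be realized as the restriction of a solution on the larger interval $(0,T_0)$, whose norm is in turn controlled by $\|\mathcal L^{-1}_{1,T_0}\|$. Since $\mathcal L^{-1}_{1,T}$ is understood here as the bounded linear operator sending a right-hand side to the solution with vanishing initial value (the relevant object for the contraction argument, where one always estimates differences of solutions sharing the same initial data), this reduces to an extension-restriction argument.

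First I would fix $f\in Y^1_T=L^2(0,T;L^2_\sigma(\Omega))$ and extend it by zero, setting $\tilde f(t):=f(t)$ for $t\in(0,T)$ and $\tilde f(t):=0$ for $t\in[T,T_0)$. Extension by zero does not change the $L^2$-integral in time, so $\tilde f\in Y^1_{T_0}$ with $\|\tilde f\|_{Y^1_{T_0}}=\|f\|_{Y^1_T}$. Let $\tilde\ve:=\mathcal L^{-1}_{1,T_0}\tilde f$ be the corresponding solution on $(0,T_0)$ with zero initial value, which exists and is unique by Lemma \ref{strong_solution_existence_proof_L_1_part1} and Lemma \ref{strong_solution_h2_regularity_v}.

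The crucial step is to identify $\tilde\ve|_{(0,T)}$ with $\mathcal L^{-1}_{1,T}f$. Since $\tilde\ve$ satisfies (\ref{strong_solution_L_first_equation}) a.e.\ on $(0,T_0)$ and $\tilde f=f$ on $(0,T)$, the restriction $\tilde\ve|_{(0,T)}$ lies in $Z^1_T$, has zero initial trace, and solves (\ref{strong_solution_L_first_equation}) a.e.\ on $(0,T)$ with right-hand side $f$. By the uniqueness statement in Lemma \ref{strong_solution_existence_proof_L_1_part1} it therefore coincides with $\mathcal L^{-1}_{1,T}f$. This is the only place where I expect a genuine subtlety: one must verify that restriction really produces an element of $X^1_T$ with the correct (vanishing) initial value and that the uniqueness result applies to the restricted problem. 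Both are consequences of the locality of the evolution equation in time.

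It then remains to chain the estimates. All three terms defining $\|\cdot\|'_{Z^1_T}$ are integrals over $(0,T)$, except the initial-value term, which is unchanged under restriction; hence the norm is monotone in $T$ and $\|\tilde\ve|_{(0,T)}\|_{X^1_T}\le\|\tilde\ve\|_{X^1_{T_0}}$. Combining this with the norm identity for the extension yields
\begin{align*}
\|\mathcal L^{-1}_{1,T}f\|_{X^1_T}=\|\tilde\ve|_{(0,T)}\|_{X^1_T}\le\|\tilde\ve\|_{X^1_{T_0}}=\|\mathcal L^{-1}_{1,T_0}\tilde f\|_{X^1_{T_0}}\le\|\mathcal L^{-1}_{1,T_0}\|_{\mathcal L(Y^1_{T_0},X^1_{T_0})}\|f\|_{Y^1_T}.
\end{align*}
Taking the supremum over $f$ with $\|f\|_{Y^1_T}\le1$ gives the claimed inequality, and finiteness of the right-hand side follows from the boundedness of $\mathcal L^{-1}_{1,T_0}$ already established in Lemma \ref{strong_solution_existence_proof_L_1_part1} and Lemma \ref{strong_solution_h2_regularity_v}.
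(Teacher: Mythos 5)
Your proposal is correct and follows essentially the same route as the paper: extend the right-hand side by zero to $(0,T_0)$, solve on the larger interval, identify the restriction with the solution on $(0,T)$ via uniqueness, and chain the norm estimates using monotonicity of $\|\cdot\|_{X^1_T}'$ under restriction. The only cosmetic difference is that you take vanishing initial data while the paper carries the fixed initial value $\ve_0$ through the argument; this does not affect the proof.
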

\begin{proof}
Let $ 0 < T < T_0 $ be given. Lemma \ref{strong_solution_existence_proof_L_1_part1} together with Lemma \ref{strong_solution_h2_regularity_v} yields that the operator $\mathcal L_{1, T} : X_T \rightarrow Y_T$ is invertible for every $0 < T < \infty$ and every given  \linebreak $\bold f \in L^2(0,T; L^2_\sigma (\Omega))$, $\varphi_0 \in W^1_r (\Omega)$, $\ve_0 \in H^1_0 (\Omega)^d \cap L^2_\sigma (\Omega)$. Then we define \linebreak $\bold{ \tilde f } \in L^2 (0,T_0, L^2_\sigma (\Omega))$ by
\begin{align*}
\bold{\tilde f} (t) := 
\begin{cases}
\bold f(t)  & \text{ if } t \in (0,T] , \\
0  & \text{ if } t \in (T, T_0 ) .
\end{cases}
\end{align*}
Due to Lemma \ref{strong_solution_existence_proof_L_1_part1} together with Lemma \ref{strong_solution_h2_regularity_v} there exists a unique solution $\tilde \ve \in X^1_{T_0}$ of
\begin{align*}
\mathbb P_\sigma (\rho_0 \partial_t  \tilde \ve) - \mathbb P_\sigma (\di (2 \eta (\varphi_0) D \tilde \ve)) &= \bold{\tilde f}   && \text{ in } Q_{T_0},   \\
\di ( \tilde \ve) &= 0 && \text{ in } Q_{T_0} , \\
\tilde \ve_{|\partial \Omega} &= 0 && \text{ on } (0, T_0) \times  \partial \Omega  , \\
\tilde \ve (0) &= \ve_0  && \text{ in } \Omega .
\end{align*}
So let $\ve \in X^1_T$ be the solution of the previous equations with $T_0$ replaced by $T$. Then $\tilde \ve$ and $\ve$ solve these equations on $(0,T) \times \Omega$. Since the solution is unique, we can deduce $\tilde \ve_{|(0,T)} = \ve$. Hence,
\begin{align*}
\|\mathcal L^{-1}_{1, T} (\bold f) \|_{X^1_T} &= \| \ve\|_{X^1_{T}} \leq \|\tilde \ve\|_{X^1_{T_0}} = \|\mathcal L^{-1}_{1, T_0} (\bold{\tilde f})\|_{X^1_{T_0}} \\
& \leq \|\mathcal L^{-1}_{1, T_0}\|_{\mathcal L (Y^1_{T_0}, X^1_{T_0})} \|\bold{\tilde f}\|_{Y^1_{T_0}} = \|\mathcal L^{-1}_{1, T_0}\|_{\mathcal L (Y^1_{T_0}, X^1_{T_0})} \| \bold f\|_{Y^1_{T}} ,
\end{align*}
which shows the statement since it holds $   \|\mathcal L^{-1}_{1, T_0}\|_{\mathcal L (Y^1_{T_0}, X^1_{T_0})} < \infty$ by the bounded inverse theorem.
\end{proof}

Finally, we have to show invertibility of the second part of $\mathcal{L}$.

\begin{lemma}\label{strong_solution_existence_proof_L_1_part2}
Let Assumption \ref{strong_solutions_general_assumptions} hold and $\varphi_0 \in (L^p (\Omega) , W^4_{p,N} (\Omega))_{1 - \frac{1}{p}, p}$, $f \in L^p (0,T; L^p (\Omega)) $ with $4 < p < 6$ be given. Then for every $0 < T < \infty$ there exists a unique 
\begin{align*} 
\varphi \in  L^p (0,T; W^4_{p,N} (\Omega)) \cap \{ u \in W^1_p(0,T; L^p (\Omega))  : \ u_{|t=0} = \varphi_0 \}
\end{align*} 
such that 
\begin{align}
\partial_t \varphi + \varepsilon m (\varphi_0) \Delta^2 \varphi &= f  && \text{ in } (0,T) \times \Omega , \label{strong_solution_second_equation_initial_equation} \\
\partial_n \varphi_{|\partial \Omega} &= 0     && \text{ on } (0,T) \times \partial \Omega  ,\\
\partial_n \Delta \varphi_{|\partial \Omega} & = 0  && \text{ on } (0,T) \times \partial \Omega ,  \\
\varphi (0) &= \varphi_0 && \text{ in } \{ 0 \} \times \Omega . \label{strong_solution_second_equation_initial_condition_start}
\end{align}
\end{lemma}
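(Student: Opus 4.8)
The plan is to read \eqref{strong_solution_second_equation_initial_equation}--\eqref{strong_solution_second_equation_initial_condition_start} as the abstract inhomogeneous Cauchy problem
\begin{align*}
\partial_t \varphi + \mathcal A \varphi = f \ \text{ in } (0,T), \qquad \varphi(0)=\varphi_0,
\end{align*}
for the operator $\mathcal A := \varepsilon\, m(\varphi_0)\,\Delta^2$ on the ground space $X_0:=L^p(\Omega)$ with domain $X_1 := \mathcal D(\mathcal A) = W^4_{p,N}(\Omega)$, and to reduce the whole assertion to the single statement that $\mathcal A$ possesses maximal $L^p$-regularity on $(0,T)$. Indeed, the trace space of $W^1_p(0,T;X_0)\cap L^p(0,T;X_1)$ at $t=0$ is exactly $(X_0,X_1)_{1-\frac1p,p}$, cf. \eqref{eq:BUCEmbedding}, so once maximal regularity is available the standard equivalence between maximal regularity and unique solvability of the inhomogeneous problem yields a unique $\varphi$ in the asserted class precisely for $f\in L^p(0,T;L^p(\Omega))$ and $\varphi_0\in (L^p(\Omega),W^4_{p,N}(\Omega))_{1-\frac1p,p}$, which is the hypothesis of the lemma. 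Since $0\in\sigma(\mathcal A)$ (constants lie in the kernel), I would first shift: substituting $\varphi\mapsto e^{-\omega t}\varphi$ replaces $\mathcal A$ by the invertible operator $\mathcal A+\omega$ for a fixed $\omega>0$ and alters $f$ only within the same space, which is harmless on the finite interval $(0,T)$.

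The core is therefore the maximal regularity of $\mathcal A+\omega$. For the constant-coefficient model I would use the operator-theoretic identity $\Delta^2=(-\Delta_N)^2$ under the two boundary conditions $\partial_n u=\partial_n\Delta u=0$: writing $A:=-\Delta_N$ for the Neumann Laplacian with $\mathcal D(A)=\{u\in W^2_p(\Omega):\partial_n u|_{\partial\Omega}=0\}$, elliptic regularity on the $C^4$-domain $\Omega$ gives $\mathcal D(A^2)=W^4_{p,N}(\Omega)$ and $A^2u=\Delta^2u$ there. Since the Neumann Laplacian has Gaussian heat-kernel bounds, $A+1$ admits a bounded $H^\infty$-calculus of angle $0$ on $L^p(\Omega)$; consequently $(A+1)^2$ has a bounded $H^\infty$-calculus of angle $0$, is $R$-sectorial of angle $<\frac\pi2$, and hence enjoys maximal $L^p$-regularity (recall that $L^p$ is a UMD space for $1<p<\infty$). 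This disposes of $\varepsilon\Delta^2+\omega$.

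To reinstate the variable coefficient, note first that $a:=\varepsilon\, m(\varphi_0)$ satisfies $a\geq \varepsilon m_0>0$ and, because $\varphi_0\in (L^p(\Omega),W^4_{p,N}(\Omega))_{1-\frac1p,p}\hookrightarrow W^{4-\frac4p}_p(\Omega)\hookrightarrow C^2(\overline\Omega)$ for $4<p<6$ and $d\le 3$, is of class $C^2(\overline\Omega)$ and in particular uniformly continuous. The operator $\mathcal A=a\,\Delta^2$ carries no lower-order terms, so its full principal symbol is $a(x)|\xi|^4$. I would then invoke the general maximal-$L^p$-regularity theory for parabolic boundary value problems with continuous top-order coefficients (in the spirit of Denk, Hieber and Pr\"uss): one has to verify (i) parameter-ellipticity, which is immediate since $a(x)|\xi|^4$ is real and strictly positive, so $\lambda+a(x)|\xi|^4\neq 0$ for $\xi\neq 0$ and $\lambda$ in any sector of opening $<\pi$ about the positive axis, and (ii) the Lopatinskii--Shapiro condition for the boundary operators $\{\partial_n,\partial_n\Delta\}$. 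An equivalent route that avoids quoting the full theorem is a localisation/freezing argument: covering $\overline\Omega$ by finitely many patches on which the oscillation of $a$ is below the reciprocal of the model maximal-regularity constant, I would write $a\Delta^2=a(x_j)\Delta^2+(a-a(x_j))\Delta^2$ on each patch, absorb the small top-order perturbation into the model estimate of the previous paragraph, and glue the pieces with a partition of unity, the commutators $[\chi_j,\Delta^2]$ being of lower order and controlled by interpolation and the shift $\omega$.

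The main obstacle is the Lopatinskii--Shapiro verification. Passing to the half-space model with a frozen coefficient $a_0>0$ and tangential frequency $\xi'$, one must show that for every $(\lambda,\xi')\neq0$ with $\lambda$ in the relevant sector the only solution $w\in L^p(\mathbb R_+)$ of $\lambda w + a_0(\partial_{x_d}^2-|\xi'|^2)^2 w=0$ satisfying the homogeneous boundary conditions $w'(0)=0$ and $\partial_{x_d}(\partial_{x_d}^2-|\xi'|^2)w(0)=0$ is $w\equiv0$; this amounts to checking that the $2\times2$ Lopatinskii determinant formed from the two stable roots $\mu$ of $(\mu^2-|\xi'|^2)^2=-\lambda/a_0$ does not vanish, a short but genuine computation relying on the positivity of $a_0$. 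Once maximal regularity is in hand, uniqueness in the stated class is automatic because the solution operator is a bijection; alternatively, for $f\equiv0$ and $\varphi_0\equiv0$ the identity obtained by testing the equation with $\Delta^2\varphi$ gives $\frac{d}{dt}\tfrac12\|\Delta\varphi\|_{L^2(\Omega)}^2+\varepsilon m_0\|\Delta^2\varphi\|_{L^2(\Omega)}^2\le 0$, which together with $\varphi(0)=0$ and the conservation of the spatial mean forces $\varphi\equiv0$.
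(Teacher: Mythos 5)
Your proposal is correct and follows essentially the same route as the paper, whose entire proof is a citation of the maximal $L^p$-regularity theorem for parameter-elliptic parabolic boundary value problems of Denk, Hieber and Pr\"uss (Theorem 8.2 of that memoir); you have simply unpacked the hypotheses of that theorem (continuity and positivity of the top-order coefficient $\varepsilon m(\varphi_0)$, parameter-ellipticity, the Lopatinskii--Shapiro condition for $\{\partial_n,\partial_n\Delta\}$, and the identification of the trace space with $(L^p,W^4_{p,N})_{1-\frac1p,p}$). The only step you leave as a stated computation, the Lopatinskii--Shapiro determinant, is likewise taken for granted by the paper, so no genuine gap separates the two arguments.
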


\begin{proof}
  The result follows from standard results on maximal regularity of parabolic equations, e.g.\ from \cite[Theorem 8.2]{MR2006641}.
\end{proof}

Analogously to the previous part we need to ensure that $\|\mathcal L^{-1}\|_{\mathcal L (Y_T, X_T)} $ remains bounded.

\begin{lemma}\label{strong_solution_lemma_l_inverse_t_t_0_secondpart}
Let the assumptions of Lemma \ref{strong_solution_existence_proof_L_1_part2} hold and $0 < T_0 < \infty$ be given. Then 
\begin{align*}
\|\mathcal L^{-1}_{2, T}\|_{\mathcal L (Y^2_T, X^2_T)} \leq \|\mathcal L^{-1}_{2, T_0}\|_{\mathcal L (Y^2_{T_0}, X^2_{T_0})} < \infty \qquad \text{ for all } 0 < T < T_0 .
\end{align*}
\end{lemma}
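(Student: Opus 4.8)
The plan is to argue exactly as in the proof of Lemma~\ref{strong_solution_lemma_l_inverse_t_t_0_firstpart}, exploiting that the coefficient $m(\varphi_0)$ appearing in the second component of $\mathcal{L}$ does not depend on $T$ and that extending the right-hand side by zero past $T$ neither enlarges its $Y^2$-norm nor alters the solution on $(0,T)$. Concretely, I would fix $0<T<T_0$ and $f\in Y^2_T=L^p(0,T;L^p(\Omega))$, and define the zero extension $\tilde f\in Y^2_{T_0}$ by $\tilde f(t):=f(t)$ for $t\in(0,T]$ and $\tilde f(t):=0$ for $t\in(T,T_0)$. Since $\tilde f$ vanishes on $(T,T_0)$, we immediately have $\|\tilde f\|_{Y^2_{T_0}}=\|f\|_{Y^2_T}$.

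Next I would apply Lemma~\ref{strong_solution_existence_proof_L_1_part2} on the interval $(0,T_0)$ with right-hand side $\tilde f$ and initial datum $\varphi_0$ to obtain a unique $\tilde\varphi\in X^2_{T_0}$ solving $\partial_t\tilde\varphi+\varepsilon m(\varphi_0)\Delta^2\tilde\varphi=\tilde f$ together with $\partial_n\tilde\varphi_{|\partial\Omega}=\partial_n\Delta\tilde\varphi_{|\partial\Omega}=0$ and $\tilde\varphi(0)=\varphi_0$; note that the two boundary conditions are already encoded in $W^4_{p,N}(\Omega)$. Let $\varphi=\mathcal{L}_{2,T}^{-1}(f)\in X^2_T$ denote the solution of the same problem on $(0,T)$. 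On $(0,T)\times\Omega$ both $\tilde\varphi_{|(0,T)}$ and $\varphi$ solve the identical equation with the same initial and boundary data, so the uniqueness part of Lemma~\ref{strong_solution_existence_proof_L_1_part2} forces $\tilde\varphi_{|(0,T)}=\varphi$. The asserted bound then follows from the chain
\begin{align*}
\|\mathcal{L}_{2,T}^{-1}(f)\|_{X^2_T}
&=\|\varphi\|_{X^2_T}
\leq\|\tilde\varphi\|_{X^2_{T_0}}
=\|\mathcal{L}_{2,T_0}^{-1}(\tilde f)\|_{X^2_{T_0}}\\
&\leq\|\mathcal{L}_{2,T_0}^{-1}\|_{\mathcal{L}(Y^2_{T_0},X^2_{T_0})}\,\|\tilde f\|_{Y^2_{T_0}}
=\|\mathcal{L}_{2,T_0}^{-1}\|_{\mathcal{L}(Y^2_{T_0},X^2_{T_0})}\,\|f\|_{Y^2_T},
\end{align*}
where the equality $\|\tilde f\|_{Y^2_{T_0}}=\|f\|_{Y^2_T}$ was used in the last step. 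Taking the supremum over $f$ yields the claim, while $\|\mathcal{L}_{2,T_0}^{-1}\|_{\mathcal{L}(Y^2_{T_0},X^2_{T_0})}<\infty$ follows from the bounded inverse theorem, Lemma~\ref{strong_solution_existence_proof_L_1_part2} guaranteeing that $\mathcal{L}_{2,T_0}\colon X^2_{T_0}\to Y^2_{T_0}$ is a bounded bijection.

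The one point deserving care --- and the step I expect to be the main (if minor) obstacle --- is the middle inequality $\|\varphi\|_{X^2_T}\leq\|\tilde\varphi\|_{X^2_{T_0}}$. Here one inspects the three contributions to the norm $\|\cdot\|'_{Z^2_T}$: the two space-time integrals $\|\partial_t\varphi\|_{L^p(0,T;L^p(\Omega))}$ and $\|\varphi\|_{L^p(0,T;W^4_{p,N}(\Omega))}$ are taken over the shorter interval $(0,T)$ and are therefore dominated by the corresponding integrals of $\tilde\varphi$ over $(0,T_0)$, whereas the trace term is unchanged because both solutions carry the same initial datum, i.e.
\begin{align*}
\|\varphi(0)\|_{(L^p(\Omega),W^4_p(\Omega))_{1-\frac1p,p}}
=\|\varphi_0\|_{(L^p(\Omega),W^4_p(\Omega))_{1-\frac1p,p}}
=\|\tilde\varphi(0)\|_{(L^p(\Omega),W^4_p(\Omega))_{1-\frac1p,p}}.
\end{align*}
This monotonicity of the norm in $T$, combined with the zero extension being norm-preserving on the data side, is precisely what makes the bound uniform in $T$ and completes the proof in parallel with Lemma~\ref{strong_solution_lemma_l_inverse_t_t_0_firstpart}.
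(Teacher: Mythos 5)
Your proposal is correct and follows exactly the route the paper intends: the paper proves this lemma by declaring it ``analogous to Lemma~\ref{strong_solution_lemma_l_inverse_t_t_0_firstpart},'' and your zero-extension, uniqueness-on-$(0,T)$, and norm-monotonicity argument is precisely that analogue carried out for the Cahn--Hilliard part. The extra care you take with the initial-trace term in the norm $\|\cdot\|'_{Z^2_T}$ is a worthwhile detail the paper leaves implicit.
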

This lemma can be proven analogously to Lemma \ref{strong_solution_lemma_l_inverse_t_t_0_firstpart}.

From the results of this section Theorem~\ref{thm:linear} follows immediately.



\begin{thebibliography}{10}

\bibitem{ModelH}
H.~Abels, \emph{On a diffuse interface model for two-phase flows of viscous,
  incompressible fluids with matched densities}, Arch. Rat. Mech. Anal.
  \textbf{194} (2009), no.~2, 463--506.

\bibitem{MR2504845}
H.~Abels, \emph{Existence of weak solutions for a diffuse interface model
  for viscous, incompressible fluids with general densities}, Comm. Math. Phys.
  \textbf{289} (2009), no.~1, 45--73. \MR{2504845}

\bibitem{AbelsBreit}
H.~Abels and D.~Breit, \emph{Weak solutions for a non-{N}ewtonian
  diffuse interface model with different densities}, Nonlinearity \textbf{29}
  (2016), no.~11, 3426--3453. \MR{3567095}

\bibitem{AbelsDepnerGarcke}
H.~Abels, D.~Depner, and H.~Garcke, \emph{Existence of weak
  solutions for a diffuse interface model for two-phase flows of incompressible
  fluids with different densities}, J. Math. Fluid Mech. \textbf{15} (2013),
  no.~3, 453--480. \MR{3084319}

\bibitem{AbelsDepnerGarckeDegMob}
\bysame, \emph{On an incompressible {N}avier-{S}tokes/{C}ahn-{H}illiard system
  with degenerate mobility}, Ann. Inst. H. Poincar\'{e} Anal. Non Lin\'{e}aire
  \textbf{30} (2013), no.~6, 1175--1190. \MR{3132421}

\bibitem{MR2890451}
H.~Abels, H.~Garcke, and G.~Gr{\"u}n, \emph{Thermodynamically
  consistent, frame indifferent diffuse interface models for incompressible
  two-phase flows with different densities}, Math. Models Methods Appl. Sci.
  \textbf{22} (2012), no.~3, 1150013, 40.

\bibitem{MR3845562}
H.~Abels, H.~Garcke, and J.~Weber, \emph{Existence of weak solutions
  for a diffuse interface model for two-phase flow with surfactants}, Commun.
  Pure Appl. Anal. \textbf{18} (2019), no.~1, 195--225. \MR{3845562}

\bibitem{Amann}
H.~Amann, \emph{Linear and quasilinear parabolic problems, volume 1: Abstract
  linear theory}, Birkh{\"{a}}user, Basel - Boston - Berlin, 1995.

\bibitem{MR2006641}
R.~Denk, M.~Hieber, and J.~Pr{\"u}ss, \emph{R-boundedness, fourier
  multipliers and problems of elliptic and parabolic type}, Mem. Amer. Math.
  Soc. \textbf{166} (2003), no.~788.

\bibitem{MR3540647}
S.~Frigeri, \emph{Global existence of weak solutions for a nonlocal model
  for two-phase flows of incompressible fluids with unmatched densities}, Math.
  Models Methods Appl. Sci. \textbf{26} (2016), no.~10, 1955--1993.
  \MR{3540647}

\bibitem{MR3981392}
C.~G. Gal, M.~Grasselli, and H.~Wu, \emph{Global weak solutions to
  a diffuse interface model for incompressible two-phase flows with moving
  contact lines and different densities}, Arch. Ration. Mech. Anal.
  \textbf{234} (2019), no.~1, 1--56. \MR{3981392}

\bibitem{GiorginiPreprint}
A.~Giorgini, \emph{Well-posedness of the two-dimensional
  {A}bels-{G}arcke-{G}rün model for two-phase flows with unmatched densities},
  Preprint, arXiv:2006.13060 (2020).

\bibitem{GruenAGG}
G.~Gr\"{u}n, \emph{On convergent schemes for diffuse interface models for
  two-phase flow of incompressible fluids with general mass densities}, SIAM J.
  Numer. Anal. \textbf{51} (2013), no.~6, 3036--3061. \MR{3127973}

\bibitem{RunstSickel}
T.~Runst and W.~Sickel, \emph{Sobolev spaces of fractional order,
  {N}emytskij operators, and nonlinear partial differential equations}, De
  Gruyter Series in Nonlinear Analysis and Applications, vol.~3, Walter de
  Gruyter \& Co., Berlin, 1996. \MR{1419319}

\bibitem{Showalter}
R.~E. Showalter, \emph{Monotone operators in {B}anach space and nonlinear
  partial differential equations}, Mathematical Surveys and Monographs,
  vol.~49, American Mathematical Society, Providence, RI, 1997. \MR{MR1422252
  (98c:47076)}

\bibitem{Dissertation_Weber}
J.~T.~Weber, \emph{{Analysis of diffuse interface models for two-phase
  flows with and without surfactants}}, Ph.D. thesis, University of Regensburg,
  urn:nbn:de:bvb:355-epub-342471, 2016.

\end{thebibliography}

\def\cprime{$'$} \def\ocirc#1{\ifmmode\setbox0=\hbox{$#1$}\dimen0=\ht0
  \advance\dimen0 by1pt\rlap{\hbox to\wd0{\hss\raise\dimen0
  \hbox{\hskip.2em$\scriptscriptstyle\circ$}\hss}}#1\else {\accent"17 #1}\fi}
\providecommand{\bysame}{\leavevmode\hbox to3em{\hrulefill}\thinspace}
\providecommand{\MR}{\relax\ifhmode\unskip\space\fi MR }
\providecommand{\MRhref}[2]{%
  \href{http://www.ams.org/mathscinet-getitem?mr=#1}{#2}
}
\providecommand{\href}[2]{#2}


\end{document}